\documentclass[reqno,centertags,11pt,a4paper]{amsart}

\usepackage[a4paper]{geometry}
\usepackage{csquotes, amsmath, mathabx,amsthm, amssymb, ragged2e, hyperref, tikz}
\usepackage{cleveref}
\usepackage[english]{babel}
\graphicspath{ {figures/} }

\hypersetup{
    colorlinks=true,
    linkcolor=blue,
    filecolor=magenta,
    citecolor=blue,
    urlcolor=cyan,
    pdfpagemode=FullScreen,
}

\newtheorem{definition}{Definition}[section]
\newtheorem{theorem}[definition]{Theorem}

\newtheorem{corollary}[definition]{Corollary}
\newtheorem{lemma}[definition]{Lemma}

\begin{document}

\newcommand{\leqC}{\lesssim}
\newcommand{\geqC}{\gtrsim}
\newcommand{\abs}[1]{\left|{#1}\right|}
\newcommand{\norm}[1]{\left\|{#1}\right\|}
\newcommand{\sbk}[1]{\left({#1}\right)}
\newcommand{\mbk}[1]{\left[{#1}\right]}
\newcommand{\bbk}[1]{\left\{{#1}\right\}}
\newcommand{\LpNorm}[2]{\left\|{#1}\right\|_{L^{#2}}}
\newcommand{\LpNormOn}[3]{\left\|{#1}\right\|_{L^{#2}(#3)}}

\newcommand{\bR}[1]{\mathbb{R}^{#1}}
\newcommand{\fhalf}{\frac{1}{2}}

\newcommand{\GamAlR}{ \mathcal{P}(\delta) }
\newcommand{\GamHfR}{ \mathcal{P}(\delta^{\frac{\beta}{2}}) }

\newcommand{\MainThmLHSOn}[1]{ \LpNormOn{ \sum_{ \gamma \in \GamAlR } f_\gamma }{p}{#1} }
\newcommand{\MainThmLHS}{ \MainThmLHSOn{\bR{2}} }
\newcommand{\MainThmRHSOn}[1]{ \LpNormOn{ \bigg( \sum_{ \gamma \in \GamAlR } \abs{f_\gamma}^2 \bigg)^\fhalf }{p}{#1} }
\newcommand{\MainThmRHS}{ \MainThmRHSOn{\bR{2}} }
\newcommand{\MainThmConsFir}{ \delta^{-\sbk{1-\frac{\beta}{2}}\sbk{\fhalf-\frac{1}{p}}} }
\newcommand{\MainThmConsSec}{  \delta^{-\sbk{\frac{1}{2} - \frac{1+\beta}{p}  }  }}

\newcommand{\SharpRect}{ {B_{\delta^{-1}\times\delta^{-\beta}}} }
\newcommand{\SharpRectG}{ {B_{ \abs{\tau}\delta^{-1} \times \abs{\tau}^2\delta^{-\beta}}} }
\newcommand{\SharpRectGR}{   {  B_{\abs{\tau}^2\delta^{-\beta}}  }    }

\pagenumbering{arabic}
\title[Small cap square function and decoupling estimates]{A note on small cap square function and decoupling estimates for the parabola}
\author{Jongchon Kim}
\address{Department of Mathematics\\ City University of Hong Kong\\Hong Kong SAR}
\email{jongckim@cityu.edu.hk}

\author{Liang Wang}
\address{Department of Mathematics\\ City University of Hong Kong\\Hong Kong SAR}
\email{L.Wang@cityu.edu.hk}

\author{Chun Keung Yeung}
\address{Department of Mathematics\\ City University of Hong Kong\\Hong Kong SAR}
\email{ckyeung222-c@my.cityu.edu.hk}

\subjclass[2020]{42B10}
\keywords{Small cap square function estimate, Small cap decoupling inequality}

\date{\today}
\begin{abstract}
In this paper, we prove small cap square function and decoupling estimates for the parabola, where the small caps are essentially axis-parallel rectangles of dimensions $\delta\times \delta^\beta$ for $0\leq \beta\leq 1$.
Our estimates complement the known results for $1 \leq \beta \leq 2$ and are sharp up to polylogarithmic factors.
\end{abstract}
\maketitle

\section{ Introduction }
In this paper, we investigate small cap square function and decoupling estimates for the parabola.
Let $\delta$ be a small positive constant and $0\leq\beta\leq 2$. We denote by $\mathcal{N}_{\mathbb{P}_1}(\delta^\beta)$ the $\delta^\beta$-neighborhood of the unit parabola
\[
\mathbb{P}_1 := \bigl\{ (\xi, \xi^2) \in \mathbb{R}^2 : |\xi| \le 1 \bigr\}.
\]
We then consider a disjoint partition $\{ \gamma\}$ of $\mathcal{N}_{\mathbb{P}_1}(\delta^\beta)$, denoted by $\mathcal{P}(\delta)$, where each partition element $\gamma$ is defined as
\[
\gamma := \bigl\{ (\xi_1, \xi_2) \in \mathcal{N}_{\mathbb{P}_1}(\delta^\beta) : \xi_1 \in I  \bigr\},
\]
for an interval $I\subset[-1,1]$ of length $\delta$.
The collection $\mathcal{P}(\delta)$ consists of almost rectangular boxes $\gamma$ with dimensions $\delta \times \delta^\beta$. When $\beta=2$, we call each $\gamma\in \mathcal{P}(\delta)$ a canonical cap. When $1\leq \beta<2$, each $\gamma\in \mathcal{P}(\delta)$ corresponds to a small cap of dimensions $R^{-\alpha}\times R^{-1}$ in the literature (e.g. \cite{DGW20}), where $\delta=R^{-\alpha}$ and $\alpha = \beta^{-1}$.

In this paper, we are primarily concerned with the range $0\leq \beta \leq 1$; in this regime, $\gamma$ is essentially an axis-parallel rectangle with dimensions $\delta\times \delta^\beta$. Let $f$ be an arbitrary function whose Fourier transform is supported in $\mathcal{N}_{\mathbb{P}_1}(\delta^\beta)$.  For any $\gamma\subset \mathbb{R}^2$, we define $f_\gamma$ as the inverse Fourier transform of $\chi_{\gamma}\widehat{f}$, namely
$$
f_\gamma:=(\chi_{\gamma}\widehat{f})^{\vee}.
$$
Let $S_{p,\delta,\beta}$ and $D_{p,\delta,\beta}$ denote the best constants for the following estimates:
\begin{equation*}
    \|f\|_{L^p(\mathbb{R}^2)} \leq S_{p,\delta,\beta} \bigg\|\bigg(\sum_{\gamma\in\mathcal{P}(\delta) }|f_\gamma|^2\bigg)^{1/2}\bigg\|_{L^p(\mathbb{R}^2)}\qquad \text{(Square function estimate)}
\end{equation*}
and
\begin{equation*}
  \|f\|_{L^p(\mathbb{R}^2)}\leq D_{p,\delta,\beta}\bigg(\sum_{\gamma\in\mathcal{P}(\delta)}\|f_\gamma\|_{L^p(\mathbb{R}^2)}^p\bigg)^{\frac{1}{p}}\qquad \text{(Decoupling inequality)},
\end{equation*}
which hold for all $f$ whose Fourier transform is supported in $\mathcal{N}_{\mathbb{P}_1}(\delta^\beta)$.

For the canonical caps, $\beta=2$, the C\'{o}rdoba-Fefferman argument \cite{Fef, Co79} yields the square function estimate
\(S_{4,\delta,2}\lesssim1\). For decoupling inequality, Bourgain and Demeter, in their landmark paper \cite{BD15}, proved that \(D_{6,\delta,2}\leq C_\epsilon\delta^{-\epsilon}\) for any \(\epsilon>0\). Guth, Maldague, and Wang \cite{GMW24} later improved this estimate to \(D_{6,\delta,2}\lesssim |\log\delta|^c\) for some constant \(c>0\).

Motivated by exponential sum estimates over slabs, such as the one appearing in Corollary \ref{cor} below, Demeter, Guth, and Wang \cite{DGW20} introduced the concept of small cap decoupling inequalities. In particular, they proved that for \(1\leq\beta\leq2\) and \( p\geq 2\), the inequality
    \[
         D_{p,\delta,\beta}\leq C_\epsilon\delta^{-\epsilon}\bigg(\delta^{-\bigl(1-\frac{2+\beta}{p}\bigr)}+\delta^{-\bigl(\frac{1}{2}-\frac{1}{p}\bigr)}\bigg)
    \]
    holds for any \(\epsilon>0\). The \(\delta^{-\epsilon}\) loss was improved to \(|\log\delta|^c\) for some constant \(c>0\) by Johnsrude \cite{Johnsrude23}.
As an analogue of small cap decoupling inequalities, Gan \cite{Gan24} established small cap square function estimates: for \(1\leq\beta\leq2\) and \(p\geq2 \),
    \[
         S_{p,\delta,\beta}\leq C_{\epsilon} \delta^{-\epsilon}\bigg(\delta^{-\bigl(1 - \frac{\beta}{2}\bigr)\bigl(\frac{1}{2} - \frac{1}{p}\bigr)} + \delta^{-\bigl(\frac{1}{2} - \frac{2}{p}\bigr)}\bigg).
    \]
Both of the above bounds are sharp up to the \(\delta^{-\epsilon}\) (or \(|\log \delta|^{c}\)) factor.

In this paper, we consider the small cap square function and decoupling estimates for the case \(0\leq\beta\leq1\). Specifically, we establish the following small cap square function estimate:
\begin{theorem}\label{sq func esti thm}
For \(0\leq \beta\leq1\) and \(p\geq2\), we have the following small cap square function estimate:
\[
S_{p,\delta,\beta} \leq C_p|\log \delta|^{c} \left( \MainThmConsFir + \MainThmConsSec \right) ,
\]
where \(C_p\) is a constant depending only on \(p\), and \(c>0\) is a positive constant. The bound is sharp up to the \(|\log \delta|^{c}\) factor.
\end{theorem}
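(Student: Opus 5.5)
The plan is to reduce to canonical caps at the coarser scale $\delta^{\beta/2}$, where a sharp square function theory with only logarithmic losses is available, and then pass from canonical caps to the small caps $\gamma\in\GamAlR$ by a local orthogonality argument. Set $R=\delta^{-\beta}$. Then $\mathcal{N}_{\mathbb{P}_1}(\delta^\beta)$ is partitioned into canonical caps $\theta\in\GamHfR$ of dimensions $\delta^{\beta/2}\times\delta^{\beta}$. Since $\beta\le 1$ gives $\delta\le\delta^{\beta/2}$, each $\theta$ is a union of about $M:=\delta^{\beta/2-1}$ small caps $\gamma$. These are vertical slabs of width $\delta$ cutting $\theta$ along the $\xi_1$-direction.

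As a warm-up I would note that a naive two-step argument is not enough. Combining the canonical square function estimate, with loss $\delta^{-\frac{\beta}{2}(\frac12-\frac2p)}$ for $p\ge4$, with the trivial $L^2$--$L^\infty$ interpolation $\|(\sum_\theta|f_\theta|^2)^{1/2}\|_p\lesssim M^{\frac12-\frac1p}\|(\sum_\gamma|f_\gamma|^2)^{1/2}\|_p$ gives the wrong exponent for large $p$. The $L^\infty$ endpoint of that interpolation is itself justified by local constancy on dual boxes. Its product loses against $\MainThmConsSec$, so the two scales must be handled jointly.

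The approach I would follow is to use the amplitude-dependent wave envelope estimate of Guth--Maldague--Wang \cite{GMW24} for the parabola at scale $R$, which holds with only a $|\log\delta|^c$ loss. It bounds $\|f\|_{L^p}^p$ by
\[
\sum_{R^{-1/2}\le s\le 1}\ \sum_{\tau:\ \ell(\tau)=s}\ \sum_{U\parallel U_{\tau,R}} |U|^{1-\frac p2}\,\|S_Uf\|_{L^2}^p .
\]
Here $U_{\tau,R}$ is the wave envelope of dimensions roughly $s^{-1}R^{1/2}\times s^{-2}\cdot$ (suitably normalized), and $S_Uf=(\sum_{\theta\subset\tau}|f_\theta|^2\chi_U)^{1/2}$. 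The key new step is to estimate each $\|S_Uf\|_{L^2}^2=\sum_{\theta\subset\tau}\|f_\theta\|_{L^2(U)}^2$ in terms of the small-cap functions. By Plancherel and local orthogonality, $\|f_\theta\|_{L^2(U)}^2\lesssim\sum_{\gamma\subset\theta}\|f_\gamma\|_{L^2(w_U)}^2$, since the $\gamma$'s are $\delta$-separated in $\xi_1$ and $U$ is long enough in the $x_1$ direction, at least once $s$ is not too small. For small $s$ one instead groups the $\gamma$'s into blocks whose $\xi_1$-width matches the dual width of $U$ and applies Cauchy--Schwarz. Then I would use Hölder on $U$,
\[
\sum_\gamma\|f_\gamma\|_{L^2(U)}^2\le |U|^{1-\frac2p}\,\Big\|\Big(\sum_\gamma|f_\gamma|^2\Big)^{1/2}\Big\|_{L^p(U)}^2 ,
\]
which turns each term into $\|(\sum_\gamma|f_\gamma|^2)^{1/2}\|_{L^p(U)}^p$ times a power of $\delta$ and $s$ coming from the number of $\gamma$'s that interact coherently on $U$. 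Summing over $U$ and $\tau$ and optimizing over $s$ should give the endpoints. At $s\approx R^{-1/2}$, i.e.\ a single canonical cap, one gets the term $\MainThmConsFir$. At $s\approx1$ one gets $\MainThmConsSec$. The number of dyadic $s$ is absorbed into $|\log\delta|^c$.

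I expect the main obstacle to be the bookkeeping in the local orthogonality step: matching the geometry of $U_{\tau,R}$, which is tilted along the normal of $\tau$, with the axis-parallel slabs $\gamma$. The exponent extracted there must be exactly right so that the worst $s$ yields the maximum of the two terms. For sharpness, I would test two examples. The first is $f$ with $\widehat f$ a smooth bump on a single canonical cap $\theta$; this gives $\MainThmConsFir$. The second is $f=\sum_\gamma \phi_\gamma$ with $\widehat{\phi_\gamma}$ a bump adapted to $\gamma$, all peaking at the origin; comparing the $L^p$ norm on the common $\delta^{-1}\times\delta^{-\beta}$ box with the square function gives $\MainThmConsSec$.
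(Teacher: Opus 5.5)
There is a genuine gap, and it sits in the range where the second term $\delta^{-(\frac12-\frac{1+\beta}{p})}$ matters. The inequality you start from,
\[
\|f\|_{L^p}^p\lesssim \sum_{R^{-1/2}\le s\le 1}\ \sum_{\ell(\tau)=s}\ \sum_{U\parallel U_{\tau,R}}|U|^{1-\frac p2}\|S_Uf\|_{L^2}^p ,
\]
with no restriction on which $U$ are counted, is false for every $p>4$. Here is a counterexample.
\begin{itemize}
\item Let $\widehat{f_\theta}$ be a nonnegative bump on each canonical cap $\theta$, normalized so that $|f_\theta|\sim 1$ on $\theta^*$.
\item Then $|f|\gtrsim R^{1/2}$ on a unit ball at the origin, so the left side is $\gtrsim R^{p/2}$.
\item On the right, take the envelope $U\ni 0$: the convex hull of the $\theta^*$ with $\theta\subset\tau$, of size about $Rs\times R$. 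For it, $\|S_Uf\|_2^2\sim sR^{1/2}\cdot R^{3/2}\sim |U|$.
\item So each $\tau$ contributes $\sim |U|=sR^2$, each dyadic $s$ contributes $\sim R^2$, and the total is $\lesssim R^2\log R$.
\end{itemize}
Bounds of this shape can only hold for $p\le 4$. Going further would need amplitude-dependent versions plus extra $L^\infty$ input, which your sketch does not supply. Interpolating an $L^4$ bound with the trivial $L^\infty$ bound $\delta^{-1/2}$ does not rescue you: it gives $\delta^{-(\frac12-\frac{2+\beta}{2p})}$, strictly worse than the claimed bound when $\beta>0$ and $p>4$.

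The paper avoids this. It uses the broad--narrow iteration of Lemma~\ref{br-na} down to scale $\delta^{\beta/2}$; the choice $C=|\log\delta|$ there is where the logarithmic loss comes from. The broad terms are handled by the rescaled bilinear estimate of Lemma~\ref{bilnear}, which is valid for all $2\le p\le\infty$. Its $L^\infty$ endpoint is what covers $p>4$ and yields $\delta^{-(\frac12-\frac{1+\beta}{p})}$ for $p\ge 6$.

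Even for $p\le 4$, your orthogonality step fails as described. Every $U_{\tau,R}$ lies in a ball of radius $R=\delta^{-\beta}$, so $U^*$ has $\xi_1$-width $\gtrsim\delta^{\beta}\gg\delta$ when $\beta<1$. Hence the sets $\operatorname{supp}\widehat{f_\gamma}+U^*$ overlap about $\delta^{\beta-1}s^{-1}$ times for every $s$, not just small $s$: $U$ is never long enough to separate the $\gamma$'s.

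Your fallback (Cauchy--Schwarz over blocks, then H\"older on each $U$) costs $(\delta^{\beta-1}s^{-1})^{p/2}$. At $s=\delta^{\beta/2}$ this gives only the trivial bound $\delta^{-\frac12(1-\frac{\beta}{2})}$, not $\delta^{-(1-\frac{\beta}{2})(\frac12-\frac1p)}$. To get the right exponent, first collect all $U$ inside an axis-parallel $\delta^{-1}\times\delta^{-\beta}$ box $T$, using $\sum_U\|S_Uf\|_2^p\le(\sum_U\|S_Uf\|_2^2)^{p/2}$. Only then use $L^2$ orthogonality of the $\gamma$'s and H\"older on $T$, at cost $|T|^{\frac12-\frac1p}$. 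This is exactly the mechanism of the paper's narrow and broad lemmas: Bernstein (or bilinear restriction) at the canonical scale, then orthogonality and H\"older on $T$.

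Your two sharpness examples are the paper's. In the second one, though, the lower bound for $\|f\|_p$ must come from the unit ball at the origin, where $|f|\gtrsim\delta^{\beta}$. On the whole common box $|f|$ is much smaller, and assuming otherwise would give the ratio $\delta^{-1/2}$, contradicting the theorem.
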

We remark that for \(0 \leq \beta \leq 1\), the inequality
\[
\left(1 - \frac{\beta}{2}\right)\left(\frac{1}{2} - \frac{1}{p}\right) \geq \frac{1}{2} - \frac{1+\beta}{p}
\]
holds if and only if \(p \leq 6\).

Our proof yields Theorem \ref{sq func esti thm} with $c=1+\frac{1}{p}$. Our argument in the proof of Theorem \ref{sq func esti thm} remains valid in the case \(1\leq\beta\leq2\) considered in \cite{Gan24} and can be used to strengthen the result of \cite{Gan24} by replacing the $\delta^{-\epsilon}$ loss with a $|\log \delta|^{c}$ loss.

We further establish the following small cap decoupling inequality:
\begin{theorem}\label{s cap}
For \(0\leq \beta\leq1\) and \(p\geq2\), the following estimate holds:
\begin{equation*}
D_{p,\delta,\beta}\leq C_p|\log \delta|^{c}\bigg(\delta^{-(2-\beta)\bigl(\frac{1}{2}-\frac{1}{p}\bigr)}+\delta^{-(1-\frac{2+\beta}{p})}\bigg),
\end{equation*}
where \(C_p\) is a constant depending only on \(p\), and \(c>0\) is a positive constant. The bound is sharp up to the \(|\log \delta|^{c}\) factor.
\end{theorem}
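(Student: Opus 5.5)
The natural route is to deduce Theorem \ref{s cap} from the square function estimate of Theorem \ref{sq func esti thm}, combined with an elementary comparison between square functions and $\ell^p$ sums. Write $N=\#\mathcal{P}(\delta)\sim\delta^{-1}$. Hölder's inequality over $\gamma$ gives the pointwise bound
\[
\Big(\sum_\gamma|f_\gamma|^2\Big)^{1/2}\le N^{\frac12-\frac1p}\Big(\sum_\gamma|f_\gamma|^p\Big)^{1/p}.
\]
Integrating, we get
\[
\|f\|_{L^p}\le S_{p,\delta,\beta}\,\delta^{-(\frac12-\frac1p)}\Big(\sum_\gamma\|f_\gamma\|_{L^p}^p\Big)^{1/p}.
\]

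We then check the exponents.
\begin{itemize}
\item For $p\le 6$, where the first term dominates in Theorem \ref{sq func esti thm}, this produces $\delta^{-(1-\frac\beta2)(\frac12-\frac1p)-(\frac12-\frac1p)}=\delta^{-(2-\frac\beta2)(\frac12-\frac1p)}$. This is worse than the claimed $\delta^{-(2-\beta)(\frac12-\frac1p)}$, so the naive Hölder step is too lossy.
\item For $p\ge6$ it gives $\delta^{-(1-\frac{2+\beta}{p})}$, which exactly matches the second term. So the large-$p$ range follows directly.
\end{itemize}

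For $2\le p\le 6$ we need a better bound. At $p=2$, orthogonality gives $D\lesssim1$. At the critical exponent $p_c$ where the two terms in Theorem \ref{s cap} coincide, the claimed bound follows from the $p\ge6$ argument, provided $p_c\ge 6$. We check this: $(2-\beta)(\frac12-\frac1p)=1-\frac{2+\beta}{p}$ gives $\frac\beta2 p=\ldots$, so $p_c=6$ for all $\beta$, since $(2-\beta)(\frac12-\frac16)=\frac{2-\beta}{3}=1-\frac{2+\beta}6$. Then interpolate between $L^2$ and $L^6$. Decoupling constants interpolate for the $\ell^p(L^p)$ formulation via a wave-packet or complex interpolation argument (standard, e.g. as in \cite{DGW20}), possibly losing $|\log\delta|$. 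This gives exponent $\theta\cdot\frac{2-\beta}{3}$ with $\frac1p=\frac{1-\theta}{2}+\frac{\theta}{6}$, i.e. $\theta=3(\frac12-\frac1p)$. The result is exactly $(2-\beta)(\frac12-\frac1p)$. The main obstacle is making this interpolation rigorous without $\delta^{-\epsilon}$ loss. I would first try a pigeonholing argument, normalizing $\|f_\gamma\|_p$ to dyadic levels, costing $O(|\log\delta|)$ factors, and then apply Riesz--Thorin to the linear map $(f_\gamma)\mapsto\sum f_\gamma$ on $\ell^p(L^p)$ restricted to Fourier supports, using smooth Fourier projections.

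Sharpness comes from two examples.
\begin{itemize}
\item Take $\widehat{f_\gamma}$ to be a smooth bump on each $\gamma$, so that $|f|\sim\delta^{-1}$ on a $\delta^{-1}\times\delta^{-\beta}$-type region near the origin. Computing the $L^p$ norms gives $\delta^{-(1-\frac{2+\beta}p)}$.
\item Take $f$ with Fourier support in a single $\delta^\beta$-cap of the parabola, which is a union of $\delta^{\beta-1}$ small caps, with modulated bumps arranged so that the pieces $f_\gamma$ have disjoint-ish supports while $f$ concentrates. Combined with the Theorem \ref{sq func esti thm} example (a factor $(\frac12-\frac1p)$ per axis), this yields $\delta^{-(2-\beta)(\frac12-\frac1p)}$.
\end{itemize}
I would verify the second example by a direct computation in the worst case $p=6$ and then check that it reproduces the full exponent.
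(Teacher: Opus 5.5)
Your argument for $p\ge 6$ is correct. There the second term of Theorem \ref{sq func esti thm} dominates, and the H\"older factor $\delta^{-(\frac12-\frac1p)}$ turns it into exactly $\delta^{-(1-\frac{2+\beta}{p})}$. The gap is in the range $2<p<6$, and it starts with an arithmetic slip. We have $1-\frac{2+\beta}{6}=\frac{4-\beta}{6}$, while $(2-\beta)(\frac12-\frac16)=\frac{4-2\beta}{6}$, so these agree only when $\beta=0$. In fact $1-\frac{2+\beta}{p}-(2-\beta)(\frac12-\frac1p)=\beta(\frac12-\frac2p)$, so the two terms of Theorem \ref{s cap} cross at $p=4$, and your proviso $p_c\ge 6$ fails. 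The $L^6$ bound you actually have is $\delta^{-\frac{4-\beta}{6}}$. Interpolating it with $D_{2,\delta,\beta}\lesssim 1$ gives $\delta^{-(2-\frac{\beta}{2})(\frac12-\frac1p)}$, which is precisely the bound you had already rejected; at $p=4$ you get $\delta^{-\frac{4-\beta}{8}}$ instead of $\delta^{-\frac{4-2\beta}{8}}$. More care in the interpolation cannot fix this. The exponents you can get from Theorem \ref{sq func esti thm} plus H\"older over the $\delta^{-1}$ caps already form a convex piecewise-linear function of $1/p$, with slope $-(2+\beta)$ and then $-(2-\frac{\beta}{2})$. Interpolating among them therefore never improves them.

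What is missing is an independent sharp $L^4$ input, $D_{4,\delta,\beta}\lesssim|\log\delta|^{c}\delta^{-\frac{2-\beta}{4}}$, which cannot be extracted from the square function estimate. The paper obtains it in two steps. First it applies the $\beta=1$ small cap decoupling theorem (Demeter--Guth--Wang, with Johnsrude's logarithmic loss) at scale $\delta^\beta$. This decouples $\mathcal{N}_{\mathbb{P}_1}(\delta^\beta)$ into $\delta^\beta\times\delta^\beta$ squares $\theta\in\mathcal{P}(\delta^\beta)$ at cost $|\log\delta|^c\delta^{-\frac{\beta}{4}}$. Then it flat-decouples each $\theta$ into its $\delta^{\beta-1}$ small caps at cost $(\delta^{\beta-1})^{\frac12}$ in $\ell^4(L^4)$. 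Interpolating this with the $L^2$ bound and the trivial bound $\|f\|_{L^\infty}\le\delta^{-1}\sup_\gamma\|f_\gamma\|_{L^\infty}$ gives the whole theorem, without using Theorem \ref{sq func esti thm} at all. Your second sharpness example has the matching defect. A single $\delta^\beta\times\delta^\beta$ square holds only $\delta^{\beta-1}$ caps and yields only $\delta^{-(1-\beta)(1-\frac2p)}$. You need a canonical cap of dimensions $\delta^{\beta/2}\times\delta^\beta$, containing $\delta^{\frac{\beta}{2}-1}$ small caps whose bumps interfere constructively on $\theta^*$. That gives $\delta^{-(1-\frac{\beta}{2})(1-\frac2p)}=\delta^{-(2-\beta)(\frac12-\frac1p)}$. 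Also, in your first example $|f|$ is large only on an $O(1)$ ball, not on a $\delta^{-1}\times\delta^{-\beta}$ region; the latter would contradict $L^2$ orthogonality.
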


We remark that for \(0\leq \beta\leq1\), the inequality
\[
(2-\beta)\bigl(\frac{1}{2}-\frac{1}{p}\bigr)\geq 1-\frac{2+\beta}{p}
\]
holds if and only if \( p\leq4\). Theorem \ref{s cap} for the critical exponent $p=4$ is a direct consequence of the small cap decoupling inequality for the case \(\beta=1\) due to Johnsrude \cite{Johnsrude23} and the flat decoupling inequality.

As a corollary of Theorem \ref{s cap}, we obtain the following result.
\begin{corollary}\label{cor}
For \(1 \le \sigma \le 2\) and \(2 \le p <\infty\), the following inequality holds:
\[
\bigg(\int_{[0,1] \times [0, N^{-\sigma}]} \bigg| \sum_{k=1}^N a_k e\bigl(x_1 k + x_2 k^2\bigr) \bigg|^p dx\bigg)^{\frac{1}{p}} \lesssim (\log N)^c \bigg( N^{\frac{\sigma}{2}(1-\frac{4}{p})} + N^{1-\frac{4}{p}} \bigg) \bigg( \sum_{k=1}^N |a_k|^p \bigg)^{\frac{1}{p}},
\]
where \(a_k \in \mathbb{C}\) are arbitrary complex numbers and \(c>0\) is a positive constant. Here, we adopt the standard notation \(e(t) := e^{2\pi i t}\).
\end{corollary}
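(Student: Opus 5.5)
The plan is to deduce Corollary \ref{cor} from Theorem \ref{s cap} by parabolic rescaling followed by a localization. The small caps will have $\delta = N^{-1}$ and $\beta = 2-\sigma$, so that $1\le\sigma\le 2$ corresponds exactly to $0\le\beta\le1$.

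\textbf{Rescaling.} Write $f(x)=\sum_{k=1}^N a_k e(x_1k+x_2k^2)$ and substitute $x_1=Ny_1$, $x_2=N^2y_2$. This gives $g(y)=f(Ny_1,N^2y_2)=\sum_k a_k e\bigl(y_1\tfrac{k}{N}+y_2\tfrac{k^2}{N^2}\bigr)$, whose frequencies $(k/N,(k/N)^2)$ lie on $\mathbb{P}_1$ and are $N^{-1}$-separated in $\xi_1$. The box $[0,1]\times[0,N^{-\sigma}]$ becomes $B:=[0,N]\times[0,N^{2-\sigma}]=[0,\delta^{-1}]\times[0,\delta^{-\beta}]$. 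Since the Jacobian factor $N^{3}$ appears identically on both sides, it suffices to bound $\|g\|_{L^p(B)}$.

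\textbf{Localization.} Fix a Schwartz function $\varphi$ with $|\varphi|\gtrsim1$ on $B$ and $\widehat\varphi$ supported in $[-c\delta,c\delta]\times[-c\delta^\beta,c\delta^\beta]$ for a small absolute $c$. For instance, take a fixed bump $\psi$ with compactly supported Fourier transform and $|\psi|\gtrsim1$ on $[0,1]^2$, and set $\varphi(y)=\psi(\delta y_1,\delta^\beta y_2)$. Then $F:=\varphi g$ has Fourier transform supported in $\bigcup_k\bigl((k/N,k^2/N^2)+\operatorname{supp}\widehat\varphi\bigr)\subset\mathcal{N}_{\mathbb{P}_1}(\delta^\beta)$, because the centers lie exactly on the parabola. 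The only technical point is that these supports must align with the partition $\mathcal P(\delta)$. I would choose the defining intervals $I$ of length $\delta$ centered at the points $k/N$; this is an admissible partition, and the proof of Theorem \ref{s cap} does not depend on the particular choice of intervals. Then each $\gamma$ contains the support of exactly one piece, so $F_\gamma=a_k e(\cdot)\varphi$ for the corresponding $k$, or $F_\gamma=0$. If one prefers the fixed partition, each piece meets at most two caps, and splitting $k$ by parity costs only a constant.

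\textbf{Applying Theorem \ref{s cap}.} The theorem gives
\[
\|g\|_{L^p(B)}\lesssim\|F\|_{L^p}\le D_{p,\delta,\beta}\Bigl(\sum_k|a_k|^p\Bigr)^{1/p}\|\varphi\|_{L^p}\lesssim D_{p,\delta,\beta}\,|B|^{1/p}\Bigl(\sum_k|a_k|^p\Bigr)^{1/p}.
\]
Undoing the change of variables, we get
\[
\|f\|_{L^p([0,1]\times[0,N^{-\sigma}])}\lesssim D_{p,\delta,\beta}\,N^{-\sigma/p}\Bigl(\sum_k|a_k|^p\Bigr)^{1/p}.
\]
With $\delta=N^{-1}$ and $\beta=2-\sigma$, Theorem \ref{s cap} gives $D_{p,\delta,\beta}\lesssim(\log N)^c\bigl(N^{\sigma(\frac12-\frac1p)}+N^{1-\frac{4-\sigma}{p}}\bigr)$. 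Multiplying by $N^{-\sigma/p}$ produces exactly $N^{\frac\sigma2(1-\frac4p)}+N^{1-\frac4p}$, which is the claimed bound.

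The main obstacle is bookkeeping rather than analysis. One must check that the Fourier support of $\varphi g$ genuinely lies in $\mathcal N_{\mathbb P_1}(\delta^\beta)$ and is compatible with the partition into $\delta\times\delta^\beta$ caps. The exponents must also match after the normalization factor $|B|^{1/p}$ is accounted for.
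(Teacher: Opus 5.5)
Your proposal is correct and follows the paper's proof essentially step for step. Both rescale $[0,1]\times[0,N^{-\sigma}]$ to $[0,N]\times[0,N^{2-\sigma}]$, multiply by a Schwartz weight with Fourier support in a $\delta\times\delta^\beta$ box, apply Theorem \ref{s cap} with $\delta=N^{-1}$ and $\beta=2-\sigma$, and use $\|\varphi\|_{L^p}^p\sim N^{3-\sigma}$. You are more careful than the paper about two points it leaves implicit: that the Fourier support lies in $\mathcal{N}_{\mathbb{P}_1}(\delta^\beta)$, which uses $\beta\le 1$, and that the pieces align with $\mathcal{P}(\delta)$, where the paper's one-sided choice $\operatorname{supp}\widehat{\phi}\subset[0,N^{-1}]\times[0,N^{\sigma-2}]$ plays the role of your centered partition. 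One small slip: the substitution should read $y_1=Nx_1$, $y_2=N^2x_2$, i.e.\ $g(y)=f(y_1/N,y_2/N^2)$ rather than $f(Ny_1,N^2y_2)$, though your formula for $g$, the box $B$, and the exponent bookkeeping already match the correct direction.
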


Corollary \ref{cor} was established in \cite{MO24} with an \(N^\epsilon\) loss for any \(\epsilon>0\) using the small cap decoupling estimate \cite{DGW20} for $\beta=1$, and the result is sharp up to this \(N^\epsilon\) factor (see \cite[Section 1.1]{MO24}). In the present work, Corollary \ref{cor} is obtained as an immediate consequence of Theorem \ref{s cap}. The improvement of the \(N^\epsilon\) loss to the logarithmic factor \( (\log N)^{c}\) is due to the logarithmic improvement in the small cap decoupling estimate \cite{Johnsrude23}. We refer the reader to \cite{DGW20} for sharp bounds for the range $0\leq \sigma \leq 1$.

\subsection*{Organization of the paper}
In Section \ref{S2}, we revise a bilinear reduction argument from \cite{DGW20} by employing the approach introduced in \cite{GMW24}. The proof of Theorem \ref{sq func esti thm} is obtained by estimating the narrow and broad parts. In Section \ref{S3}, we present the proof of Theorem \ref{s cap} and Corollary \ref{cor}. Finally, in Section \ref{S4}, we establish the sharpness of both Theorem \ref{sq func esti thm} and Theorem \ref{s cap}.

\subsection*{Notation}
We summarize below the notations that are frequently used throughout this paper.
\begin{enumerate}
    \item We write $A \lesssim B$ to mean that there exists an absolute constant $C > 0$ such that $A \leq C B$.
Furthermore, we write $A \sim B$ if both $A \lesssim B$ and $B \lesssim A$ hold.
    \item Let \( \delta,\beta,\kappa>0 \). We denote by \( \mathcal{P}(\kappa) \) a disjoint partition \( \{\tau\} \) of \( \mathcal{N}_{\mathbb{P}_1}(\delta^\beta) \), where each element \( \tau \) of the partition is given by
\[
\tau := \bigl\{ (\xi_1, \xi_2) \in \mathbb{R}^2 : \xi_1 \in I,\ |\xi_2 - \xi_1^2| \le \delta^\beta \bigr\},
\]
with \( I\subset[-1,1] \) being an interval of length \( \kappa \). Furthermore, we denote by \( \widetilde{\tau} \) the union of \( \tau \) and its neighboring elements \( \tau^\prime\in\mathcal{P}(\kappa) \).
    \item For a rectangle $T \subset \mathbb{R}^2$ with side lengths $a \times b$, its dual rectangle $T^*$ is defined as the rectangle centered at the origin, with side lengths $a^{-1} \times b^{-1}$, and whose sides are parallel to the corresponding sides of $T$. Additionally, we denote by $w_T$ a smooth function satisfying $w_T \sim 1$ on $T$, decaying rapidly outside $T$, and $\operatorname{supp}\widehat{w_T} \subset T^*$.
\end{enumerate}

 \section{The proof of Theorem \ref{sq func esti thm}}\label{S2}
In this section, we focus on the proof of Theorem \ref{sq func esti thm}.
It suffices to prove the local estimate
\begin{equation}\label{main es1}
  \|f\|_{L^p(T)} \leq C_p |\log\delta|^{c}\Bigl( \MainThmConsFir + \MainThmConsSec \Bigr)  \bigg\|\bigg(\sum_{\gamma\in\mathcal{P}(\delta) }|f_\gamma|^2\bigg)^{1/2}\bigg\|_{L^p(w_T)},
\end{equation}
where $T$ is an axis-parallel rectangle of dimensions $\delta^{-1} \times \delta^{-\beta}$. We obtain Theorem \ref{sq func esti thm} by summing \eqref{main es1} over translates of $T$ partitioning $\mathbb{R}^2$.

\subsection{Broad-Narrow reduction}
In this subsection, we decompose $f$ into broad and narrow parts, which revises the argument in \cite{DGW20} by adopting the idea in \cite{GMW24, Johnsrude23}; this ensures that the small cap square function estimate incurs only a loss of $|\log\delta|^c$. Specifically, let $K=8$ and $m$ be an integer satisfying $K^m\leq\delta^{-\frac{\beta}{2}}<K^{m+1}$. Since $K^{-m} \sim \delta^{\frac{\beta}{2}}$, we henceforth identify $\mathcal{P}(K^{-m})$ with $\mathcal{P}\bigl(\delta^{\frac{\beta}{2}}\bigr)$. Note that each $\theta \in \mathcal{P}\bigl(\delta^{\frac{\beta}{2}}\bigr)$ is a canonical cap of dimensions $\delta^{\beta/2} \times \delta^\beta$. We establish the following estimate:
\begin{equation}\label{main es2}
          \|f\|_{L^p(T)} \leq C_p |\log\delta| \Big( \text{Narrow} + \text{Broad}\Big)^{\frac{1}{p}},
      \end{equation}
where
\begin{equation}\label{narrow}
      \text{Narrow} = \sum_{\theta \in \mathcal{P}\bigl(\delta^{\frac{\beta}{2}}\bigr)} \|f_\theta\|_{L^p(T)}^p,
      \end{equation}
and
\begin{equation}\label{broad}
      \text{Broad} = \sum_{i=0}^{m-1} \sum_{\tau \in \mathcal{P}(K^{-i})} \sum_{\substack{ \tau_1,\tau_2 \in \mathcal{P}(K^{-i-1}) \\ \tau_1,\tau_2 \subset \widetilde{\tau} \\ \text{dist}(\tau_1,\tau_2) \geq  K^{-i-1} }} \LpNormOn{\abs{f_{\tau_1}f_{\tau_2}}^\frac{1}{2}}{p}{T}^p.
      \end{equation}
      Here, for each $\tau\in \mathcal{P}(K^{-i})$, $\widetilde{\tau}$ denotes the union of $\tau$ and its neighboring elements $\tau^\prime\in\mathcal{P}(K^{-i})$.

To prove estimate \eqref{main es2}, we require the following lemmas.
\begin{lemma}\label{inarrow}
    Let $\{a_i\}_{i=1}^n$ be a sequence of complex numbers, and let $C\geq1$ be a positive constant. Then, at least one of the following estimates holds:
    \begin{equation}
        \bigg|\sum_{i=1}^n a_i\bigg|\leq 2C n \max_{\substack{i,j:|i-j|>1}}|a_ia_j|^{\frac{1}{2}},
    \end{equation}
    and
    \begin{equation}
        \bigg|\sum_{i=1}^n a_i\bigg|\leq \left(1+C^{-1}\right)\max_{i}\bigg|\sum_{j:|i-j|\leq1}a_j\bigg|.
    \end{equation}
\end{lemma}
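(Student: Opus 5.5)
We argue by a dichotomy on the size of the largest term. Set $S=\sum_{i=1}^n a_i$ and $M=\max_i |a_i|$, and pick an index $i_0$ with $|a_{i_0}|=M$. If $S=0$ both estimates are trivial, so assume $S\neq 0$.

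\emph{Case 1: some $j$ with $|j-i_0|>1$ has $|a_j|\geq M/(2Cn)$.} Then
\[
\max_{|i-j|>1}|a_ia_j|^{\frac12}\geq \bigl(M\cdot M/(2Cn)\bigr)^{\frac12}=M(2Cn)^{-\frac12}.
\]
The triangle inequality gives $|S|\leq nM$. It therefore suffices to check that $nM\leq 2Cn\cdot M(2Cn)^{-1/2}=M(2Cn)^{1/2}$, i.e.\ $n\leq (2Cn)^{1/2}$, i.e.\ $n\leq 2C$. This holds only for small $n$, so the threshold needs adjusting. Instead, define Case 1 as: some $j$ with $|j-i_0|>1$ has $|a_j|\geq M/(4C^2)$. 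Then
\[
\max_{|i-j|>1}|a_ia_j|^{\frac12}\geq M/(2C),
\]
and hence $2Cn\max_{|i-j|>1}|a_ia_j|^{\frac12}\geq nM\geq |S|$. This is the first estimate.

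\emph{Case 2: every $j$ with $|j-i_0|>1$ has $|a_j|<M/(4C^2)$.} Write $S=A+B$, where
\[
A=\sum_{|j-i_0|\leq 1}a_j,\qquad B=\sum_{|j-i_0|>1}a_j .
\]
We want $|S|\leq (1+C^{-1})|A|$, which follows from $|B|\leq C^{-1}|A|$. The bound on $B$ is $|B|<nM/(4C^2)$. This carries a factor $n$, and $|A|$ may be much smaller than $M$ because of cancellation among the neighbours of $i_0$. So the threshold must absorb $n$, and cancellation inside $A$ is the main obstacle.

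\emph{Fixing the thresholds.} Use the threshold $M/(4C^2n^2)$ instead. In Case 1 this gives
\[
2Cn\max_{|i-j|>1}|a_ia_j|^{\frac12}\geq 2Cn\cdot M/(2Cn)=M.
\]
That yields $|S|\leq nM$, still off by $n$. So compare with $|S|$ directly rather than with $nM$.

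\emph{Complete argument.} Let $i_0$ be the index maximizing $|A_i|$, where $A_i:=\sum_{|j-i|\leq1}a_j$, and set $A=A_{i_0}$.

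If $|S|\leq(1+C^{-1})|A|$, the second estimate holds and we are done.

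Otherwise $|S|>(1+C^{-1})|A|$. Then $|S|\leq nM$ forces $M\geq |S|/n$. Let $i_1$ be an index with $|a_{i_1}|=M$. We now look for $j$ with $|j-i_1|>1$ and $|a_j|\geq |S|/(4C^2n)$. If such $j$ exists, then
\[
|a_{i_1}a_j|^{\frac12}\geq |S|/(2Cn),
\]
which is the first estimate. If no such $j$ exists, then
\[
\Bigl|\sum_{|j-i_1|>1}a_j\Bigr|<|S|/(4C^2)\leq |S|/(4C),
\]
so
\[
|S|\leq |A_{i_1}|+|S|/(4C)\leq |A|+|S|/(4C).
\]
Hence $|S|(1-\tfrac1{4C})\leq |A|$, and since $(1-\tfrac1{4C})^{-1}\leq 1+C^{-1}$ for $C\geq1$, we get $|S|\leq(1+C^{-1})|A|$. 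This contradicts the assumption, so the first estimate must hold.

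The key step is this last comparison. Measuring the smallness of the far terms relative to $|S|/n$, rather than relative to $M$, removes the factor $n$ and circumvents the cancellation inside $A$.
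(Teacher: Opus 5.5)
Your final argument (the part headed ``Complete argument'') is correct and is essentially the paper's proof. Assuming the second estimate fails, you take an index of maximal $|a_i|$, split $S$ into the neighbouring block and the far terms, use the triangle inequality to force a far term of size comparable to $|S|/(Cn)$, and pair it with the maximal term. The only difference is bookkeeping: you use $M\ge |S|/n$ with the threshold $|S|/(4C^2n)$, whereas the paper uses $|a_{i^{**}}|^2\le |a_{i^*}a_{i^{**}}|$ and $n|a_{i^{**}}|>|S|/(C+1)$. The abandoned exploratory cases should simply be deleted from the write-up.
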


\begin{proof}
    Suppose that the following inequality holds:
    \begin{equation}\label{ina:eq1}
        \bigg|\sum_{i=1}^n a_i\bigg|> \left(1+C^{-1}\right)\max_{i}\bigg|\sum_{j:|i-j|\leq1}a_j\bigg|.
    \end{equation}
    Let $i^*$ and $i^{**}$ be indices such that $|a_{i^*}|=\max_{1\leq i\leq n}|a_i|$ and $|a_{i^{**}}|=\max_{\substack{i:|i-i^*|>1}}|a_i|$. By the triangle inequality, we have
    \begin{equation}\label{ina:eq2}
        \bigg|\sum_{j:|i^*-j|\leq1}a_j\bigg|\geq \bigg|\sum_{i=1}^n a_i\bigg|-n|a_{i^{**}}|.
    \end{equation}
    Combining inequalities \eqref{ina:eq1} and \eqref{ina:eq2}, we derive
    \begin{align*}
        n|a_{i^{**}}|&\geq \bigg|\sum_{i=1}^n a_i\bigg| - \bigg|\sum_{j:|i^*-j|\leq1}a_j\bigg| \\
        &> \bigg|\sum_{i=1}^n a_i\bigg| - \frac{1}{1+C^{-1}}\bigg|\sum_{i=1}^n a_i\bigg| \\
        &= \frac{1}{C+1}\bigg|\sum_{i=1}^n a_i\bigg|.
    \end{align*}
    Note that
    \begin{equation*}
        |a_{i^{**}}|^2 \leq |a_{i^*}a_{i^{**}}| \leq \max_{\substack{i,j:|i-j|>1}}|a_ia_j|,
    \end{equation*}
    which implies $|a_{i^{**}}|\leq \max_{\substack{i,j:|i-j|>1}}|a_ia_j|^{\frac{1}{2}}$. Substituting this into the above inequality, we conclude
    \begin{equation*}
        \bigg|\sum_{i=1}^n a_i\bigg|\leq 2C n\max_{\substack{i,j:|i-j|>1}}|a_ia_j|^{\frac{1}{2}}.
    \end{equation*}
\end{proof}

\begin{lemma}\label{br-na}
The following pointwise bound holds for all $x\in\mathbb{R}^2$ and $p>0$:
\begin{equation}\label{br-na:eq1}
    |f(x)| \leq C_p |\log\delta|\bigg(
        \sum_{\theta \in \mathcal{P}\bigl(\delta^{\frac{\beta}{2}}\bigr)} |f_\theta(x)|^p
        + \sum_{i=0}^{m-1} \sum_{\tau \in \mathcal{P}(K^{-i})}
          \sum_{\substack{ \tau_1,\tau_2 \in \mathcal{P}(K^{-i-1}) \\
                          \tau_1,\tau_2 \subset \widetilde{\tau} \\
                          \operatorname{dist}(\tau_1,\tau_2) \geq  K^{-i-1} }}
          \abs{f_{\tau_1}(x)f_{\tau_2}(x)}^{\frac{p}{2}}
    \bigg)^{\frac{1}{p}},
\end{equation}
where $C_p$ is a constant depending only on $p$ and $c$ is an absolute constant.
\end{lemma}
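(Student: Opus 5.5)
We fix $x$ and iterate Lemma \ref{inarrow} down the scales $K^{-i}$, $i=0,\dots,m$. At each scale $K^{-i}$ we use the decomposition $f_\tau=\sum_{\tau'\in\mathcal{P}(K^{-i-1}),\,\tau'\subset\tau}f_{\tau'}$, where each $\tau$ contains $K$ children. We apply Lemma \ref{inarrow} with $C=m$, so that the multiplicative losses $(1+m^{-1})^m\le e$ stay bounded over the $m\sim|\log\delta|$ steps. The children $\tau'$ are ordered along the parabola, and the separation condition $|i-j|>1$ in Lemma \ref{inarrow} translates into $\operatorname{dist}(\tau_1,\tau_2)\ge K^{-i-1}$.

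The key steps are as follows.

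\textbf{Step 1 (start).} Set $g_0=f$. Since $\mathcal{P}(1)$ consists of essentially one element, we take $\tau_0=\widetilde{\tau_0}$ to be the whole neighborhood.

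\textbf{Step 2 (inductive step).} Suppose that at stage $i$ we have $|f(x)|\le (1+m^{-1})^i|F_i(x)|$, where $F_i=\sum_{\tau'\subset\widetilde{\tau}}f_{\tau'}$ for some $\tau\in\mathcal{P}(K^{-i})$. Here $F_i$ is a sum over at most $3K$ caps of $\mathcal{P}(K^{-i-1})$. Apply Lemma \ref{inarrow} to this sum, with $n\le 3K$ and $a_j=f_{\tau'_j}(x)$. There are two outcomes.
\begin{itemize}
\item \emph{Broad case.} We get $|F_i(x)|\le 6Km\max|f_{\tau_1}(x)f_{\tau_2}(x)|^{1/2}$, the maximum running over pairs $\tau_1,\tau_2\subset\widetilde\tau$ with $\operatorname{dist}(\tau_1,\tau_2)\ge K^{-i-1}$. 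We bound the maximum by the $\ell^p$ sum, which is at most the full broad term of \eqref{br-na:eq1} raised to the power $1/p$, and stop.
\item \emph{Narrow case.} We get $|F_i(x)|\le(1+m^{-1})|f_{\widetilde{\tau''}}(x)|$ for some $\tau''\in\mathcal{P}(K^{-i-1})$, and we continue the induction with $\tau''$.
\end{itemize}
One technical point: the neighbors of $\tau''$ at scale $K^{-i-1}$ might lie outside $\widetilde\tau$. This is harmless, since $\widetilde{\tau''}$ is a sum of three adjacent caps of $\mathcal{P}(K^{-i-1})$, which is exactly the form the next application of the lemma needs. The lemma is applied to the children of $\widetilde{\tau''}$, and these lie in the $\widetilde{\cdot}$ of the parent at the next level. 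The indexing of pairs may be off by a neighbor. We fix this either by enlarging the definition slightly or by noting that for each pair there is a $\tau\in\mathcal{P}(K^{-i-1})$ whose $\widetilde\tau$ contains both.

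\textbf{Step 3 (end).} If we never stop, then at $i=m$ we obtain $|f(x)|\le e\,|f_{\widetilde\theta}(x)|\le 3e\max_\theta|f_\theta(x)|$ with $\theta\in\mathcal{P}(\delta^{\beta/2})$. This is bounded by the narrow term of \eqref{br-na:eq1}.

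\textbf{Combining.} In the broad case the constant is $e\cdot 6Km\lesssim|\log\delta|$, since $K=8$ and $m\sim\beta|\log\delta|$. In the narrow case the constant is $O(1)$. Taking the maximum of the two cases and bounding it by the sum of the two $\ell^p$ expressions gives \eqref{br-na:eq1} with $C_p|\log\delta|$.

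The main obstacle is the bookkeeping. The choice $C=m$ is what makes the narrow-case constants multiply to $O(1)$ rather than $C^m$. The neighborhood sets $\widetilde\tau$ must also be consistent across scales, so that the sets appearing in the iteration are of the form indexed in the broad sum. The per-step loss $6Km$ then appears only once, since we stop at the first broad step.
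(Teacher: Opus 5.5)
Your proposal is correct and is essentially the paper's proof. The paper also iterates Lemma \ref{inarrow} down the $m$ scales with a logarithmic constant, taking $C=|\log\delta|$ where you take $C=m$ (either keeps $(1+C^{-1})^m\le e$), and it records the outcome as an accumulated sum over levels rather than your stopping-time maximum. The boundary issue you raise is handled loosely in the paper too: the narrow alternative only returns $\tau''$ together with its neighbours lying inside $\widetilde{\tau}$, so it is a sub-sum of $f_{\widetilde{\tau''}}$ rather than $f_{\widetilde{\tau''}}$ itself. This is harmless because every pair produced at the next step still lies in $\widetilde{\tau''}$ with $\tau''\in\mathcal{P}(K^{-i-1})$, which matches the indexing of the broad sum.
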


\begin{proof}
For each $\tau\in \mathcal{P}(K^{-i})$ with $0\leq i\leq m-1$, we first observe the decomposition
\begin{equation*}
    f_{\widetilde{\tau}}(x) = \sum_{\substack{ \tau_1 \in \mathcal{P}(K^{-i-1}) \\ \tau_1\subset {\widetilde{\tau}}}} f_{\tau_1}(x).
\end{equation*}
Applying Lemma \ref{inarrow} to this sum with parameters $n=K=8$ and $C=|\log\delta|$ (note that $C=|\log\delta|\geq1$, which satisfies the hypothesis of Lemma \ref{inarrow}), we conclude that at least one of the following two estimates holds:
\begin{equation}
    |f_{\widetilde{\tau}}(x)|^p \leq \left(16|\log\delta|\right)^p
        \max_{\substack{ \tau_1,\tau_2 \in \mathcal{P}(K^{-i-1}) \\
                        \tau_1,\tau_2 \subset {\widetilde{\tau}} \\
                        \operatorname{dist}(\tau_1,\tau_2) \geq  K^{-i-1} }}
        \abs{f_{\tau_1}(x)f_{\tau_2}(x)}^{\frac{p}{2}},
\end{equation}
and
\begin{equation}
    |f_{\widetilde{\tau}}(x)|^p \leq \left(1+|\log\delta|^{-1}\right)^p
        \max_{\substack{ \tau_1 \in \mathcal{P}(K^{-i-1}) \\
                        \tau_1\subset {\widetilde{\tau}}}} |f_{\widetilde{\tau_1}}(x)|^p.
\end{equation}

Using this for the sum $f = \sum_{\tau_1 \in \mathcal{P}(K^{-1})} f_{\tau_1}$, we obtain
\begin{align}\label{br-na:eq2}
    |f(x)|^p &\leq \left(16|\log\delta|\right)^p
        \max_{\substack{ \tau_1,\tau_2 \in \mathcal{P}(K^{-1})  \\
                        \operatorname{dist}(\tau_1,\tau_2) \geq  K^{-1} }}
        \abs{f_{\tau_1}(x)f_{\tau_2}(x)}^{\frac{p}{2}}
        + \left(1+|\log\delta|^{-1}\right)^p
        \max_{\substack{ \tau_1 \in \mathcal{P}(K^{-1})}} |f_{\widetilde{\tau_1}}(x)|^p.
\end{align}
Next, for any $\tau\in\mathcal{P}(K^{-1})$, we apply the same reasoning to obtain
\begin{align}\label{br-na:eq3}
    |f_{\widetilde{\tau}}(x)|^p &\leq \left(16|\log\delta|\right)^p
        \max_{\substack{ \tau_1,\tau_2 \in \mathcal{P}(K^{-2})  \\
                        \tau_1,\tau_2\subset\widetilde{\tau}\\
                        \operatorname{dist}(\tau_1,\tau_2) \geq  K^{-2} }}
        \abs{f_{\tau_1}(x)f_{\tau_2}(x)}^{\frac{p}{2}}
        + \left(1+|\log\delta|^{-1}\right)^p
        \max_{\substack{ \tau_1 \in \mathcal{P}(K^{-2})\\
                        \tau_1\subset\widetilde{\tau }}} |f_{\widetilde{\tau_1}}(x)|^p.
\end{align}
Combining inequalities \eqref{br-na:eq2} and \eqref{br-na:eq3}, we find
\begin{align*}
|f(x)|^p &\leq \sum_{i=0}^1 \left(1+|\log\delta|^{-1}\right)^{pi} \left(16|\log\delta|\right)^p
    \sum_{\tau\in\mathcal{P}(K^{-i})}
    \max_{\substack{ \tau_1,\tau_2 \in \mathcal{P}(K^{-i-1}) \\
                    \tau_1,\tau_2 \subset {\widetilde{\tau}} \\
                    \operatorname{dist}(\tau_1,\tau_2) \geq  K^{-i-1} }}
    \abs{f_{\tau_1}(x)f_{\tau_2}(x)}^{\frac{p}{2}}\\
&\quad + \left(1+|\log\delta|^{-1}\right)^{2p}
    \max_{\substack{ \tau_1 \in \mathcal{P}(K^{-2}) }} |f_{\widetilde{\tau_1}}(x)|^p.
\end{align*}
Extending this iterative process to $m$ steps, we arrive at
\begin{align*}
|f(x)|^p
& \leq \sum_{i=0}^{m-1} \left(1+|\log\delta|^{-1}\right)^{pi} \left(16|\log\delta|\right)^p
    \sum_{\tau\in\mathcal{P}(K^{-i})}
    \max_{\substack{ \tau_1,\tau_2 \in \mathcal{P}(K^{-i-1}) \\
                    \tau_1,\tau_2 \subset {\widetilde{\tau}} \\
                    \operatorname{dist}(\tau_1,\tau_2) \geq  K^{-i-1} }}
    \abs{f_{\tau_1}(x)f_{\tau_2}(x)}^{\frac{p}{2}}\\
&\quad + \left(1+|\log\delta|^{-1}\right)^{mp}
    \max_{\substack{ \theta \in \mathcal{P}\bigl(\delta^{\frac{\beta}{2}}\bigr) }} |f_{\widetilde{\theta}}(x)|^p\\
& \leq C_p |\log\delta|^{p} \bigg(
    \sum_{\theta \in \mathcal{P}\bigl(\delta^{\frac{\beta}{2}}\bigr)} |f_\theta(x)|^p
    + \sum_{i=0}^{m-1} \sum_{\tau \in \mathcal{P}(K^{-i})}
      \sum_{\substack{ \tau_1,\tau_2 \in \mathcal{P}(K^{-i-1}) \\
                      \tau_1,\tau_2 \subset \widetilde{\tau} \\
                      \operatorname{dist}(\tau_1,\tau_2) \geq  K^{-i-1} }}
      \abs{f_{\tau_1}(x)f_{\tau_2}(x)}^{\frac{p}{2}}
\bigg).
\end{align*}
Here, the final inequality uses the facts that $m< |\log\delta|$ and $\left(1+|\log\delta|^{-1}\right)^{|\log\delta|}$ is bounded by $e$.
\end{proof}
Then, the estimate \eqref{main es2} follows from Lemma \ref{br-na}. Therefore, it suffices to estimate the narrow and broad parts separately.
\subsection{Broad-Narrow analysis}
In this subsection, we aim to establish estimates for the narrow part and the broad part. To this end, we first state the following basic lemmas, which will be essential for our subsequent analysis.
\begin{lemma}[Local $L^2$-orthogonality; see e.g. \cite{Gan24}, Lemma 2]\label{ortho}
Let $\{f_i\}_i$ be a sequence of functions, and let $T\subset\mathbb{R}^2$ be a rectangle such that the family $\{\text{supp} \widehat{f_i} + T^*\}_i$ has finite overlap. Then the estimate
\begin{equation*}
    \int_{\mathbb{R}^2}\bigg|\sum_{i}f_iw_T\bigg|^2\,\mathrm{d}x \lesssim \int_{\mathbb{R}^2}\sum_{i}|f_iw_T|^2\,\mathrm{d}x
\end{equation*}
holds, where $w_T$ denotes a smooth function satisfying $w_T\sim 1$ on $T$, decaying rapidly outside $T$, and $\text{supp}\widehat{w_T}\subset T^*$.
\end{lemma}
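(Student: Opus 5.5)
The plan is to expand the square and apply Plancherel. Because $|w_T|^2$ need not be tidy, we work with the functions $g_i := f_i w_T$. Since $\operatorname{supp}\widehat{w_T}\subset T^*$, we have $\operatorname{supp}\widehat{g_i}\subset \operatorname{supp}\widehat{f_i}+T^*$. By hypothesis these sets have finite overlap, say each point lies in at most $N$ of them with $N\lesssim 1$. Plancherel then gives
\[
\int_{\mathbb{R}^2}\bigg|\sum_i g_i\bigg|^2\,\mathrm{d}x=\int_{\mathbb{R}^2}\bigg|\sum_i \widehat{g_i}(\xi)\bigg|^2\,\mathrm{d}\xi .
\]

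Next we bound the sum pointwise in $\xi$. At each $\xi$ at most $N$ of the terms $\widehat{g_i}(\xi)$ are nonzero. Cauchy--Schwarz over these terms yields
\[
\bigg|\sum_i \widehat{g_i}(\xi)\bigg|^2\le N\sum_i|\widehat{g_i}(\xi)|^2 .
\]
Integrating in $\xi$ and applying Plancherel to each $g_i$ separately gives
\[
\int\bigg|\sum_i f_i w_T\bigg|^2\le N\sum_i\int|f_i w_T|^2,
\]
which is the claim. The implied constant is exactly the overlap number.

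The only point requiring care is the justification of the support inclusion, which uses $\widehat{f_iw_T}=\widehat{f_i}*\widehat{w_T}$ (valid for, say, Schwartz or $L^2$ functions, and by density in general). There is also a convergence issue when the family is infinite. To handle it, we first prove the bound for finite subfamilies and then pass to the limit by monotone convergence on the right-hand side together with Fatou's lemma on the left. I expect no real obstacle beyond this bookkeeping.
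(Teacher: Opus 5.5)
Your proof is correct and is the standard argument: the paper gives no proof of this lemma and simply cites \cite{Gan24}, where it follows in the same way from $\operatorname{supp}\widehat{f_iw_T}\subset\operatorname{supp}\widehat{f_i}+T^*$, Plancherel, and a pointwise Cauchy--Schwarz bound using the finite overlap. Every application in the paper involves a finite family (the caps in $\mathcal{P}(\delta)$), so your limiting step for infinite families is not actually needed.
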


\begin{lemma}[Narrow part]\label{Narrow esti}
Let $\theta\in\mathcal{P}(\delta^{\frac{\beta}{2}})$ and  $T$ be an axis-parallel rectangle of dimensions $\delta^{-1} \times \delta^{-\beta}$. Then the following inequality holds for $p\geq 2$:
\[
\LpNormOn{f_\theta}{p}{T} \leq C \MainThmConsFir \bigg\|\bigg(\sum_{\substack{\gamma\in\mathcal{P}(\delta) \\ \gamma \subset \theta}}|f_\gamma|^2\bigg)^{1/2}\bigg\|_{L^p(w_T)}.
\]
\end{lemma}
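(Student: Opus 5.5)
The plan is to prove the narrow estimate by interpolating, on the single rectangle $T$, between an $L^2$ bound with constant $\sim 1$ and an $L^\infty$ bound with constant $N^{1/2}$. Here
\[
N\sim \delta^{\frac{\beta}{2}-1}
\]
is the number of $\gamma\in\mathcal{P}(\delta)$ contained in $\theta$. The target constant is
\[
\MainThmConsFir = N^{\frac12-\frac1p}.
\]
Write $S_\theta := \big(\sum_{\gamma\subset\theta}|f_\gamma|^2\big)^{1/2}$. I would first use
\[
\|f_\theta\|_{L^p(T)}^p \le \|f_\theta\|_{L^\infty(T)}^{p-2}\,\|f_\theta\|_{L^2(w_T)}^2,
\]
and then bound the two factors separately.

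\emph{$L^2$ factor.} For $0\le\beta\le1$, each $\gamma$ is contained in an axis-parallel rectangle of dimensions $\sim\delta\times\delta^\beta$. The parabola's vertical deviation over an interval of length $\delta$ is $O(\delta)$, which is at most $\delta^\beta$ up to constants. The dual rectangle $T^*$ has dimensions $\delta\times\delta^\beta$ and is axis-parallel. Hence the sets $\gamma+T^*$ have finite overlap, since the $\gamma$'s are consecutive in $\xi_1$ with step $\delta$. Lemma \ref{ortho} then gives
\[
\|f_\theta\|_{L^2(w_T)}^2\lesssim \|S_\theta\|_{L^2(w_T)}^2 \le |T|^{1-\frac2p}\|S_\theta\|_{L^p(w_T)}^2 ,
\]
where the last step is Hölder's inequality (after adjusting the power of the weight, which is harmless for rapidly decaying weights).

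\emph{$L^\infty$ factor.} Pointwise Cauchy--Schwarz gives $|f_\theta(x)|\le N^{1/2}S_\theta(x)$. I then need a locally constant estimate:
\[
\sup_{x\in T} S_\theta(x)^2 \lesssim |T|^{-1}\int S_\theta^2\, w_T .
\]
To prove it, I would take a Schwartz bump $\phi_\gamma$ equal to $1$ on $\gamma$, adapted to the $\delta\times\delta^\beta$ box containing $\gamma$. Then $f_\gamma=f_\gamma*\check\phi_\gamma$, and $|\check\phi_\gamma|\lesssim |T|^{-1}w_{T}$ up to translation. Cauchy--Schwarz in the convolution then gives $|f_\gamma(x)|^2\lesssim |T|^{-1}\int|f_\gamma|^2 w_{T}$ for $x\in T$; summing over $\gamma$ gives the claim. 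Applying Hölder once more,
\[
\|f_\theta\|_{L^\infty(T)}^2\lesssim N\,|T|^{-\frac2p}\|S_\theta\|_{L^p(w_T)}^2 .
\]

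\emph{Combining.} Putting the two factors together,
\[
\|f_\theta\|_{L^p(T)}^p\lesssim N^{\frac{p-2}{2}}|T|^{-\frac{p-2}{p}}\,|T|^{1-\frac2p}\,\|S_\theta\|_{L^p(w_T)}^{p}.
\]
The powers of $|T|$ cancel, leaving $\|f_\theta\|_{L^p(T)}\lesssim N^{\frac12-\frac1p}\|S_\theta\|_{L^p(w_T)}$, which is the claim.

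The main obstacle I expect is technical rather than conceptual. It consists of justifying the locally constant property with the same weight $w_T$ appearing on both sides. It also requires confirming that each $\gamma$ really sits inside an axis-parallel $\delta\times\delta^\beta$ box, which is where $\beta\le1$ is used. I would handle the weight issue by allowing slightly different rapidly decaying weights at intermediate steps and absorbing them by the standard comparison $w_T*|T|^{-1}w_T\lesssim w_T$.
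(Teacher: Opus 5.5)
Your proof is correct, but it produces the constant by a different mechanism than the paper.

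The paper reduces $L^p$ to $L^2$ at the intermediate scale $\theta^*$. It covers $T$ by boxes $B$ of dimensions $\delta^{-\beta/2}\times\delta^{-\beta}$ and applies Bernstein's inequality to $f_\theta$ on each $B$, getting $\|f_\theta\|_{L^p(B)}\lesssim |B|^{\frac1p-\frac12}\|f_\theta\|_{L^2(w_B)}$. It then sums via $\ell^2\subset\ell^p$ and only afterwards uses Lemma \ref{ortho} and H\"older on $T$, so the constant appears as $(|T|/|B|)^{\frac12-\frac1p}$.

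You never use the coarse frequency localization of $f_\theta$. You interpolate between $L^2$ and $L^\infty$ on $T$ itself. The factor comes from pointwise Cauchy--Schwarz over the $N$ caps $\gamma\subset\theta$, combined with the local constancy of $S_\theta$ at the scale of $T$, which is dual to the axis-parallel $\delta\times\delta^\beta$ box containing each $\gamma$. The two constants agree because $N\sim|\theta|/|\gamma|\sim|T|/|\theta^*|$.

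Your route works with axis-parallel boxes at a single scale. It needs only that each $\gamma$ lies in an axis-parallel $\delta\times\delta^\beta$ box, which is exactly what Lemma \ref{ortho} already requires. In the paper, the boxes $B$ must really be translates of $\theta^*$ for Bernstein to give the stated gain. That box is tilted when $\theta$ is away from the vertex, so the $B$'s only cover $T$ with bounded overlap rather than tile it. This is harmless, but it is left implicit. The paper's chain, for its part, is the standard Bernstein--orthogonality--H\"older argument, and it uses the fine decomposition into $\gamma$'s only in the orthogonality step.
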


\begin{proof}

We decompose ${T}$ into a disjoint union of rectangles $B$ with dimensions $\delta^{-\frac{\beta}{2}} \times \delta^{-\beta}$.
Then,
\begin{align*}
   \|f_\theta\|_{L^p({T})}
   &=\bigg(\sum_{B\subset {T}}\|f_\theta\|_{L^p(B)}^p\bigg)^{1/p}\\
   & \lesssim  |B|^{\frac{1}{p}-\frac{1}{2}}\bigg(\sum_{B\subset {T}}\|f_\theta\|_{L^2(w_B)}^p\bigg)^{1/p}
   \qquad\text{(by Bernstein's inequality)}\\
   &\lesssim\delta^{\frac{3\beta}{2}\bigl(\frac{1}{2}-\frac{1}{p}\bigr)}\bigg(\sum_{B\subset {T}}\|f_\theta\|_{L^2(w_B)}^2\bigg)^{1/2}\\
   &\lesssim\delta^{\frac{3\beta}{2}\bigl(\frac{1}{2}-\frac{1}{p}\bigr)}\|f_\theta\|_{L^2(w_{{T}})}.
\end{align*}
By the local $L^2$-orthogonality (Lemma~\ref{ortho}), we further obtain
\begin{align*}
\|f_\theta\|_{L^p(T)}
\lesssim\delta^{\frac{3\beta}{2}\bigl(\frac{1}{2}-\frac{1}{p}\bigr)}\|f_\theta\|_{L^2(w_{{T}})}
\lesssim\delta^{\frac{3\beta}{2}\bigl(\frac{1}{2}-\frac{1}{p}\bigr)}\bigg\|\bigg(\sum_{\substack{\gamma\in\mathcal{P}(\delta) \\ \gamma \subset \theta}}|f_\gamma|^2\bigg)^{1/2}\bigg\|_{L^2(w_{{T}})}.
\end{align*}
Applying H\"older's inequality and using the fact $|T|\sim \delta^{-(1+\beta)}$, we derive
\begin{align*}
\bigg\|\bigg(\sum_{\substack{\gamma\in\mathcal{P}(\delta) \\ \gamma \subset \theta}}|f_\gamma|^2\bigg)^{1/2}\bigg\|_{L^2(w_{{T}})}
&\lesssim|{T}|^{\frac{1}{2}-\frac{1}{p}}\bigg\|\bigg(\sum_{\substack{\gamma\in\mathcal{P}(\delta) \\ \gamma \subset \theta}}|f_\gamma|^2\bigg)^{1/2}\bigg\|_{L^p(w_{{T}})}\\
&\lesssim\delta^{-(1+\beta)\bigl(\frac{1}{2}-\frac{1}{p}\bigr)}\bigg\|\bigg(\sum_{\substack{\gamma\in\mathcal{P}(\delta) \\ \gamma \subset \theta}}|f_\gamma|^2\bigg)^{1/2}\bigg\|_{L^p(w_{T})}.
\end{align*}
Thus, we conclude that
\begin{align*}
  \|f_\theta\|_{L^p(T)}  &\lesssim\delta^{-\bigl(1-\frac{\beta}{2}\bigr)\bigl(\frac{1}{2}-\frac{1}{p}\bigr)}\bigg\|\bigg(\sum_{\substack{\gamma\in\mathcal{P}(\delta) \\ \gamma \subset \theta}}|f_\gamma|^2\bigg)^{1/2}\bigg\|_{L^p(w_{T})}.
\end{align*}
\end{proof}
We next state the following bilinear restriction estimate.

\begin{lemma}\label{bilnear}
Let $1<R$, and let $f_1,f_2$ be functions such that $\text{supp}\widehat{f_1},\text{supp}\widehat{f_2}\subset\mathcal{N}_{\mathbb{P}_1}(R^{-1})$ and $\text{dist}\bigl(\text{supp}\widehat{f_1},\text{supp}\widehat{f_2}\bigr)\gtrsim 1$. Then, we have
\begin{enumerate}
    \item For $2\leq p\leq4$, the estimate
    \begin{equation}\label{bil}
        \|(f_1f_2)^{1/2}\|_{L^p(B_R)}\lesssim R^{\frac{2}{p}-1}\bigl(\|f_1\|_{L^2(w_{B_R})}\|f_2\|_{L^2(w_{B_R})}\bigr)^{1/2}
    \end{equation}
    holds.
    \item For $4\leq p\leq\infty$, the estimate
    \begin{equation}\label{bil1}
        \|(f_1f_2)^{1/2}\|_{L^p(B_R)}\lesssim R^{-1/2}\bigl(\|f_1\|_{L^2(w_{B_R})}\|f_2\|_{L^2(w_{B_R})}\bigr)^{1/2}
    \end{equation}
    holds.
\end{enumerate}
\end{lemma}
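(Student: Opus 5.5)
The plan is to prove the two bounds by reducing both to the classical bilinear $L^4$ estimate for transverse pieces of the parabola, and then interpolating with trivial endpoint estimates.

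\textbf{Step 1: the $L^4$ estimate.} The key input is
\[
\|(f_1f_2)^{1/2}\|_{L^4(B_R)}\lesssim R^{-1/2}\bigl(\|f_1\|_{L^2(w_{B_R})}\|f_2\|_{L^2(w_{B_R})}\bigr)^{1/2}.
\]
To prove it, write $\|f_1f_2\|_{L^2(B_R)}^2$ and multiply by a weight $w_{B_R}$ whose Fourier transform is supported in $B(0,R^{-1})$. Plancherel then turns the quantity into an $L^2$ norm of $\widehat{f_1w}*\widehat{f_2w}$. Each $\widehat{f_jw}$ is supported in $\mathcal{N}_{\mathbb{P}_1}(CR^{-1})$. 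Decompose $\widehat{f_jw}$ into pieces on $R^{-1}$-caps $\theta_j$, that is, boxes of size about $R^{-1}\times R^{-1}$ along the separated arcs. Transversality ($\operatorname{dist}\gtrsim1$) gives that the sumsets $\theta_1+\theta_2$ overlap only $O(1)$ times. This is the classical Córdoba--Fefferman observation: the map $(\xi,\eta)\mapsto(\xi+\eta,\xi^2+\eta^2)$ has Jacobian $2|\xi-\eta|\gtrsim1$.

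From this, orthogonality gives $\|f_1f_2w^2\|_2^2\lesssim\sum_{\theta_1,\theta_2}\|f_{1,\theta_1}f_{2,\theta_2}w^2\|_2^2$. Each term is bounded by
\[
\|f_{1,\theta_1}w\|_\infty^2\,\|f_{2,\theta_2}w\|_2^2\lesssim R^{-2}\|f_{1,\theta_1}w\|_1^2\cdots.
\]
A cleaner route is to bound $\|\widehat{g_1}*\widehat{g_2}\|_2$ directly. Cauchy--Schwarz together with the fact that the preimage of a point under the convolution structure has measure $\lesssim R^{-2}$ gives
\[
\|\widehat{g_1}*\widehat{g_2}\|_2^2\lesssim R^{-2}\|\widehat{g_1}\|_2^2\|\widehat{g_2}\|_2^2.
\]
Concretely, $|\widehat{g_1}*\widehat{g_2}(\zeta)|^2\le|\{(a,b):a+b=\zeta,\ a\in S_1,\ b\in S_2\}|\cdot\int|\widehat{g_1}(a)|^2|\widehat{g_2}(\zeta-a)|^2\,da$. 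The fiber has length $\lesssim R^{-1}$ by transversality of the two strips, because thin strips of width $R^{-1}$ crossing at an angle $\gtrsim1$ meet in a set of diameter $\lesssim R^{-1}$. That yields only $R^{-1}$, not $R^{-2}$. So the $L^4$ bound to aim for is $\|f_1f_2\|_{L^2}\lesssim R^{-1/2}\|f_1w\|_2\|f_2w\|_2$. This matches $R^{\frac{2}{p}-1}=R^{-1/2}$ at $p=4$, after taking square roots, since $\|(f_1f_2)^{1/2}\|_4^2=\|f_1f_2\|_2$. I expect this fiber-measure computation, done with the weights $w_{B_R}$ and the Fourier support enlargement handled correctly, to be the main technical point.

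\textbf{Step 2: $2\le p\le4$.} At $p=2$, Cauchy--Schwarz gives $\|(f_1f_2)^{1/2}\|_{L^2(B_R)}\le(\|f_1\|_2\|f_2\|_2)^{1/2}$, with exponent $R^0$. Hölder interpolation between $L^2$ and $L^4$ applies, since $\|g\|_p\le\|g\|_2^{1-\vartheta}\|g\|_4^\vartheta$ with $g=|f_1f_2|^{1/2}$. This gives the exponent $-\vartheta/2$ with $\frac1p=\frac{1-\vartheta}{2}+\frac\vartheta4$, i.e.\ $\vartheta=2-\frac4p$. So the power is $R^{\frac2p-1}$, which is \eqref{bil}.

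\textbf{Step 3: $4\le p\le\infty$.} Here I would use local constancy. Since $\widehat{f_jw_{B_R}}$ is supported in a set of diameter $O(1)$, Bernstein on unit balls gives $\|f_1f_2\|_{L^\infty(B_R)}\lesssim\|f_1f_2\|_{L^2(w_{B_R})}$, up to weights. The $L^\infty$ bound is then again $R^{-1/2}(\|f_1\|_2\|f_2\|_2)^{1/2}$ after square roots. Interpolating between $L^4$ and $L^\infty$ with the same constant gives \eqref{bil1}. One has to check that the local-constancy step is compatible with the weights, applying Step 1 with $w_{B_R}$ replaced by a slightly larger weight.
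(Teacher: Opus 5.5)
\textbf{Step 1 does not prove the bound you need, because of two compensating errors.} Your architecture matches the paper's: $p=2$ by Cauchy--Schwarz, a bilinear $L^4$ bound, an $L^\infty$ bound, then H\"older interpolation. Your exponents in Steps 2--3 are correct \emph{given} the $L^4$ bound $\|(f_1f_2)^{1/2}\|_{L^4(B_R)}\lesssim R^{-1/2}\bigl(\|f_1\|_{L^2(w_{B_R})}\|f_2\|_{L^2(w_{B_R})}\bigr)^{1/2}$. Equivalently, you need $\|f_1f_2\|_{L^2(B_R)}\lesssim R^{-1}\|f_1\|_{L^2(w_{B_R})}\|f_2\|_{L^2(w_{B_R})}$.

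The first error is the size of the fiber. The convolution $\widehat{g_1}*\widehat{g_2}(\zeta)=\int_{\mathbb{R}^2}\widehat{g_1}(a)\widehat{g_2}(\zeta-a)\,da$ is a two-dimensional integral. So Cauchy--Schwarz brings in the \emph{area} of the fiber $S_1\cap(\zeta-S_2)$, not a length. As you correctly note, this fiber has diameter $\lesssim R^{-1}$, so its area is $\lesssim R^{-2}$. Your first claim was right; the ``correction'' to $R^{-1}$ confuses diameter with measure.

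The second error is the square-root bookkeeping. Since $\|(f_1f_2)^{1/2}\|_{L^4}=\|f_1f_2\|_{L^2}^{1/2}$, the bound $\|f_1f_2\|_{L^2}\lesssim R^{-1/2}\|f_1w\|_2\|f_2w\|_2$ that you settle on gives only $R^{-1/4}$ in \eqref{bil} at $p=4$, not $R^{-1/2}$. The two mistakes cancel in your displayed target. But followed literally, your argument gives $R^{\frac1p-\frac12}$ in Step 2 and $R^{-1/4}$ in Step 3, both too weak. Step 3 is affected because it feeds the same bilinear $L^2$ bound through Bernstein.

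\textbf{The repair is local.} Keep the area bound: $|\widehat{g_1}*\widehat{g_2}(\zeta)|^2\lesssim R^{-2}\int|\widehat{g_1}(a)|^2|\widehat{g_2}(\zeta-a)|^2\,da$. Integrating in $\zeta$ gives $\|g_1g_2\|_2\lesssim R^{-1}\|g_1\|_2\|g_2\|_2$, which is exactly what is required. Your abandoned cap decomposition also gives this, since $\|f_{1,\theta_1}\|_\infty\le\|\widehat{f_{1,\theta_1}}\|_1\lesssim|\theta_1|^{1/2}\|f_{1,\theta_1}\|_2\sim R^{-1}\|f_{1,\theta_1}\|_2$. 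The paper does not prove this $L^4$ estimate; it cites it from Lee.

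\textbf{For $p=\infty$ the paper's route is simpler.} It uses a linear bound that needs no transversality: for each $j$, $\|f_j\|_{L^\infty(B_R)}\lesssim\|\widehat{f_j}*\widehat{w_{B_R}}\|_{L^1}\lesssim|\mathcal{N}_{\mathbb{P}_1}(R^{-1})|^{1/2}\|f_j\|_{L^2(w_{B_R})}\sim R^{-1/2}\|f_j\|_{L^2(w_{B_R})}$. Your Bernstein-at-unit-scale argument is also valid once Step 1 is corrected, but it is a detour through the bilinear estimate.
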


\begin{proof}
For $p=2$, estimate \eqref{bil} follows directly from H\"older's inequality. For $p=\infty$, we compute
\begin{align*}
\|(f_1f_2)^{1/2}\|_{L^\infty(B_R)}^2
&\lesssim\bigg\|\widehat{f_1}*\widehat{w_{B(R)}}\bigg\|_{L^1(\mathbb{R}^2)}\bigg\|\widehat{f_2}*\widehat{w_{B(R)}}\bigg\|_{L^1(\mathbb{R}^2)}\\
&\lesssim\bigl|\mathcal{N}_{\mathbb{P}_1}(R^{-1})\bigr|\bigg\|\widehat{f_1}*\widehat{w_{B(R)}}\bigg\|_{L^2(\mathbb{R}^2)}\bigg\|\widehat{f_2}*\widehat{w_{B(R)}}\bigg\|_{L^2(\mathbb{R}^2)}\\
&\lesssim R^{-1}\|f_1\|_{L^2(w_{B_R})}\|f_2\|_{L^2(w_{B_R})}.
\end{align*}
For $p=4$, the result reduces to the standard bilinear restriction estimate (see, e.g., \cite[Lemma 2.4]{Lee04}) . Combining these boundary cases with H\"older's inequality yields estimates \eqref{bil} and \eqref{bil1} for all $2\leq p\leq\infty$.
\end{proof}
\begin{lemma}[Broad part]
\label{Broad esti}
Let $T$ be an axis-parallel rectangle of dimensions $\delta^{-1} \times \delta^{-\beta}$. Fix $\tau\in\mathcal{P}(\Delta)$, where $\Delta=K^{-i}$ for some $0\leq i<m$, and let $\tau_1,\tau_2\in \mathcal{P}(K^{-1}\Delta)$ with $\tau_1,\tau_2\subset\widetilde{\tau}$ and $\text{dist}(\tau_1,\tau_2) \geq  K^{-1}\Delta$. Then, we have
\[ \LpNormOn{\abs{f_{\tau_1}f_{\tau_2}}^\fhalf}{p}{T}  \leqC \sbk{\MainThmConsFir+\MainThmConsSec}  \Big\| \Big(\sum_{ \substack{ \gamma \subset \widetilde{\tau} \\ \gamma \in \mathcal{P}(\delta) }  } \abs{f_\gamma}^2 \Big)^\fhalf\Big\|_{L^p({w_T})}.\]
 \end{lemma}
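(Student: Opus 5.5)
We will prove the lemma by parabolic rescaling of $\widetilde{\tau}$ to unit scale, then the bilinear estimate (Lemma \ref{bilnear}) on balls of the natural scale, then local $L^2$-orthogonality (Lemma \ref{ortho}) on all of $T$, and finally H\"older's inequality.

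\emph{Rescaling.} Let $c$ be the center of $\tau$ and $\Delta=K^{-i}$. Consider the affine map $L(\xi_1,\xi_2)=\bigl(\Delta^{-1}(\xi_1-c),\,\Delta^{-2}(\xi_2-2c\xi_1+c^2)\bigr)$. It maps $\widetilde{\tau}\cap\mathcal{N}_{\mathbb{P}_1}(\delta^\beta)$ into $\mathcal{N}_{\mathbb{P}_1}(R^{-1})$, where $R:=\Delta^2\delta^{-\beta}$. Since $i<m$ we have $\Delta\gtrsim\delta^{\beta/2}$, so $R\gtrsim 1$. The images of $\tau_1,\tau_2$ are $\gtrsim K^{-1}\sim 1$ separated, and each $\gamma\subset\widetilde{\tau}$ becomes a rectangle of dimensions $\sim(\delta/\Delta)\times R^{-1}$. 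Write $g_j$ for the rescaled versions of $f_{\tau_j}$. On the physical side, $T$ becomes a parallelogram $T'$ of dimensions $\sim\Delta\delta^{-1}\times\Delta^2\delta^{-\beta}$, and the Jacobian is $|J|\sim\Delta^{-3}$. Because $\beta\le 1$ and $\Delta\le 1$, we have $\Delta\delta^{-1}\ge\Delta^2\delta^{-\beta}=R$. Hence $T'$ can be tiled by balls $B_R$. Moreover, the dual of $T'$ has dimensions $\Delta^{-1}\delta\times\Delta^{-2}\delta^\beta$, which exactly matches the rescaled small caps. This last point is why orthogonality must be applied on $T$ rather than on the balls $B_R$: the rescaled caps have width $\delta/\Delta$, which may be much smaller than $R^{-1}$.

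\emph{Bilinear step and summation.} On each $B_R\subset T'$, Lemma \ref{bilnear} gives
\[
\|(g_1g_2)^{1/2}\|_{L^p(B_R)}\lesssim R^{a(p)}\bigl(\|g_1\|_{L^2(w_{B_R})}\|g_2\|_{L^2(w_{B_R})}\bigr)^{1/2},
\]
with $a(p)=\frac2p-1$ for $2\le p\le4$ and $a(p)=-\frac12$ for $p\ge4$. Raise this to the $p$-th power and sum over $B_R$. Since $p/4\ge 1/2$, the embedding $\ell^2\subset\ell^{p/2}$ followed by Cauchy--Schwarz bounds the sum by
\[
R^{a(p)p}\,\|g_1\|_{L^2(w_{T'})}^{p/2}\|g_2\|_{L^2(w_{T'})}^{p/2}.
\]
Undoing the rescaling, the $L^p$ and $L^2$ norms pick up the Jacobian factor $|J|^{1/p-1/2}=\Delta^{3(\frac12-\frac1p)}$. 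Thus
\[
\LpNormOn{\abs{f_{\tau_1}f_{\tau_2}}^\fhalf}{p}{T}\lesssim R^{a(p)}\Delta^{3(\frac12-\frac1p)}\bigl(\|f_{\tau_1}\|_{L^2(w_T)}\|f_{\tau_2}\|_{L^2(w_T)}\bigr)^{1/2}.
\]
Next, apply Lemma \ref{ortho} on $T$, whose dual $\delta\times\delta^{\beta}$ box makes the sets $\gamma+T^*$ finitely overlapping. This gives $\|f_{\tau_j}\|_{L^2(w_T)}\lesssim\|(\sum_{\gamma\subset\widetilde{\tau}}|f_\gamma|^2)^{1/2}\|_{L^2(w_T)}$. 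Finally, H\"older's inequality passes to $L^p(w_T)$ at the cost of $|T|^{\frac12-\frac1p}=\delta^{-(1+\beta)(\frac12-\frac1p)}$.

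\emph{Exponent bookkeeping.} For $p\ge4$ the total constant is
\[
R^{-1/2}\Delta^{3(\frac12-\frac1p)}\delta^{-(1+\beta)(\frac12-\frac1p)}=\Delta^{\frac12-\frac3p}\,\MainThmConsSec .
\]
For $p\ge6$ this is at most the second term, since $\Delta\le1$. For $4\le p<6$ the worst case is $\Delta=\delta^{\beta/2}$, and there the constant equals $\MainThmConsFir$. For $2\le p\le4$ the constant is a monomial in $\Delta$. So it is maximized at an endpoint $\Delta\in\{\delta^{\beta/2},1\}$, and a direct check shows it is dominated by the first term; alternatively, one can interpolate between $p=2$ and $p=4$. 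I expect the main obstacle to be the geometric verification in the rescaling step. This means checking, uniformly in $\Delta$, that $T'$ is tiled by $R$-balls with consistent weights $w_{B_R}\lesssim w_{T'}$ after summation, and that the finite-overlap hypothesis of Lemma \ref{ortho} holds for the caps $\gamma\subset\widetilde{\tau}$ relative to the axis-parallel $T^*$. This is plausible because for $\beta\le1$ the caps are essentially axis-parallel $\delta\times\delta^\beta$ rectangles.
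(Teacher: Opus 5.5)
Your proposal is correct and follows the paper's proof essentially step for step. The steps are: parabolic rescaling of $\widetilde{\tau}$; the bilinear estimate of Lemma \ref{bilnear} on squares of side $R=\Delta^2\delta^{-\beta}$ tiling the rescaled $T$; Cauchy--Schwarz in the sum over squares; local orthogonality (Lemma \ref{ortho}) on $T$ itself; H\"older. The bookkeeping is also the same, giving $(\Delta^{-1}\delta^{\beta-1})^{\frac12-\frac1p}$ for $2\le p\le 4$ and $\Delta^{\frac12-\frac3p}\delta^{-(\frac12-\frac{1+\beta}{p})}$ for $p\ge 4$.

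The only slip is cosmetic. When summing over the squares, the embedding you need is $\ell^1\subset\ell^{p/2}$ applied to the products $\|g_1\|_{L^2(w_B)}\|g_2\|_{L^2(w_B)}$, which is valid for all $p\ge 2$. The embedding $\ell^2\subset\ell^{p/2}$ you wrote would require $p\ge 4$.
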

\begin{proof}
We apply the parabolic rescaling: for a fixed $\tau\in\mathcal{P}(\Delta)$ centered at $(x_1, x_1^2)$, we define the affine transformation $G_\tau$ on $\mathbb{R}^2$ by
\begin{equation*}
    G_\tau(\xi_1,\xi_2):= \begin{bmatrix}
        x_1\\
        x_1^2
        \end{bmatrix} + L_\tau \begin{bmatrix}
        \xi_1\\
        \xi_2
        \end{bmatrix},
\end{equation*} where \( L_\tau = \begin{bmatrix}
        \Delta & 0 \\
        2\Delta x_1 & \Delta^2
        \end{bmatrix} \).
Then $G_\tau^{-1}(\widetilde{\tau})$ coincides with $\mathcal{N}_{\mathbb{P}_1}(\delta^\beta\Delta^{-2})$, and $L_\tau^{\top}({T})$ is a rectangle with dimensions $\delta^{-1}\Delta \times \delta^{-\beta}\Delta^2$, where $L_\tau^\top$ is the transpose of $L_\tau$. We further decompose $L^{\top}_\tau({T})$ into a disjoint union of squares $B$ (rectangles with equal side lengths) with dimensions $\delta^{-\beta}\Delta^2 \times \delta^{-\beta}\Delta^2$, and define
\[
\widehat{g_1}(\xi) := \widehat{f}_{\tau_1}(G_\tau(\xi)) \quad \text{and} \quad \widehat{g_2}(\xi) := \widehat{f}_{\tau_2}(G_\tau(\xi)).
\]
It follows that $\operatorname{supp}\widehat{g_1}, \operatorname{supp}\widehat{g_2} \subset G_\tau^{-1}(\widetilde{\tau})$ and $\operatorname{dist}\bigl(\operatorname{supp}\widehat{g_1},\operatorname{supp}\widehat{g_2}\bigr)>K^{-1}\gtrsim 1$. Moreover, for \(i=1, 2\), \(|f_{\tau_i}(x)| = \abs{\det(L_\tau)} |g_i( L_{\tau}^{\top}x  )|\). Then, we have the following estimate:
\begin{align}\label{bres1}
\LpNormOn{\abs{f_{\tau_1}f_{\tau_2}}^{\frac{1}{2}}}{p}{{T}}
&= \Delta^{3\left(1 - \frac{1}{p}\right)} \Big\|\abs{g_1g_2}^{\frac{1}{2}}\Big\|_{L^p(L_\tau^{\top}({T}))} \\
&\lesssim \Delta^{3\left(1 - \frac{1}{p}\right)} \bigg(\sum_{B\subset L_\tau^{\top}({T})} \Big\|\abs{g_1g_2}^{\frac{1}{2}}\Big\|_{L^p(B)}^p\bigg)^{\frac{1}{p}}. \nonumber
\end{align}

When $2 \le p \le 4$, we apply the bilinear restriction estimate to each $B$, yielding
\begin{equation}\label{bres2}
\Big\|\abs{g_1g_2}^{\frac{1}{2}}\Big\|_{L^p(B)} \lesssim \left(\Delta^2\delta^{-\beta}\right)^{\frac{2}{p} - 1} \bigl(\|g_1\|_{L^2(w_{B})}\|g_2\|_{L^2(w_{B})}\bigr)^{\frac{1}{2}}.
\end{equation}
Combining the estimates \eqref{bres1}, \eqref{bres2} and the inclusion $\ell^1\subset\ell^{\frac{p}{2}}$, we deduce that
\begin{align*}
\LpNormOn{\abs{f_{\tau_1}f_{\tau_2}}^{\frac{1}{2}}}{p}{{T}}
& \lesssim   \Delta^{3\left(1 - \frac{1}{p}\right)} \left(\Delta^2\delta^{-\beta}\right)^{\frac{2}{p} - 1} \bigg(\sum_{B\subset L_\tau^\top(T)} \bigl(\|g_1\|_{L^2(w_{B})}\|g_2\|_{L^2(w_{B})}\bigr)^{\frac{p}{2}} \bigg)^{\frac{1}{p}} \\
& \lesssim   \Delta^{3\left(1 - \frac{1}{p}\right)} \left(\Delta^2\delta^{-\beta}\right)^{\frac{2}{p} - 1} \bigg(\sum_{B\subset L_\tau^\top(T)} \|g_1\|_{L^2(w_{B})}\|g_2\|_{L^2(w_{B})} \bigg)^{\frac{1}{2}}.
\end{align*}
Furthermore, by the Cauchy-Schwarz inequality, we have
\begin{align*}
  \sum_{B\subset L_\tau^\top(T)} \|g_1\|_{L^2(w_{B})}\|g_2\|_{L^2(w_{B})}&\leq
  \bigg(\sum_{B\subset L_\tau^\top(T)} \|g_1\|_{L^2(w_{B})}^2\bigg)^{\frac{1}{2}}\bigg(\sum_{B\subset L_\tau^\top(T)} \|g_2\|_{L^2(w_{B})}^2\bigg)^{\frac{1}{2}}\\
 & \lesssim \|g_1\|_{L^2(w_{L_\tau^\top(T)})}\|g_2\|_{L^2(w_{L_\tau^\top(T)})}\\&\lesssim \Delta^{-3} \|f_{\tau_1}\|_{L^2(w_{T})}\|f_{\tau_2}\|_{L^2(w_{T})}.
\end{align*}
It follows that
\begin{align}\label{bres3}
\nonumber\LpNormOn{\abs{f_{\tau_1}f_{\tau_2}}^{\frac{1}{2}}}{p}{{T}}
 &\lesssim   \Delta^{3\left(1 - \frac{1}{p}\right)} \left(\Delta^2\delta^{-\beta}\right)^{\frac{2}{p} - 1} \Delta^{-\frac{3}{2}} \|f_{\tau_1}\|_{L^2(w_{T})}^{\frac{1}{2}}\|f_{\tau_2}\|_{L^2(w_{T})}^{\frac{1}{2}}\\
 &= \left(\Delta^{-1}\delta^{2\beta}\right)^{\frac{1}{2} - \frac{1}{p}}
          \|f_{\tau_1}\|_{L^2(w_{T})}^{\frac{1}{2}}\|f_{\tau_2}\|_{L^2(w_{T})}^{\frac{1}{2}}.
\end{align}
By Lemma \ref{ortho}, we obtain
\begin{equation}\label{bres4}
     \|f_{\tau_1}\|_{L^2(w_{T})}\|f_{\tau_2}\|_{L^2(w_{T})}\lesssim
     \Big\| \Big(\sum_{\substack{\gamma \subset \widetilde{\tau} \\ \gamma \in \mathcal{P}(\delta)}} \abs{f_\gamma}^2 \Big)^{\frac{1}{2}} \Big\|_{L^2(w_{T})}^2,
\end{equation}
and by H\"older's inequality,
\begin{align}\label{bres5}
  \nonumber\Big\| \Big(\sum_{\substack{\gamma \subset \widetilde{\tau} \\ \gamma \in \mathcal{P}(\delta)}} \abs{f_\gamma}^2 \Big)^{\frac{1}{2}} \Big\|_{L^2(w_{T})}&\lesssim |T|^{\frac{1}{2}-\frac{1}{p}}\Big\| \Big(\sum_{\substack{\gamma \subset \widetilde{\tau} \\ \gamma \in \mathcal{P}(\delta)}} \abs{f_\gamma}^2 \Big)^{\frac{1}{2}} \Big\|_{L^p(w_{T})}
\\&\lesssim \delta^{-(1+\beta)(\frac{1}{2}-\frac{1}{p})}\Big\| \Big(\sum_{\substack{\gamma \subset \widetilde{\tau} \\ \gamma \in \mathcal{P}(\delta)}} \abs{f_\gamma}^2 \Big)^{\frac{1}{2}} \Big\|_{L^p(w_{T})}.
\end{align}
Therefore, combining estimates \eqref{bres3}, \eqref{bres4}, and \eqref{bres5}, we deduce
\begin{align}\label{bres6}
\nonumber
\LpNormOn{\abs{f_{\tau_1}f_{\tau_2}}^{\frac{1}{2}}}{p}{{T}}
\nonumber
&\lesssim \left(\Delta^{-1}\delta^{2\beta}\right)^{\frac{1}{2} - \frac{1}{p}} \delta^{-(1+\beta)\left(\frac{1}{2} - \frac{1}{p}\right)}
  \Big\| \Big(\sum_{\substack{\gamma \subset \widetilde{\tau} \\ \gamma \in \mathcal{P}(\delta)}} \abs{f_\gamma}^2 \Big)^{\frac{1}{2}} \Big\|_{L^p(w_{T})} \\
\nonumber
&= \left(\Delta^{-1}\delta^{\beta - 1}\right)^{\frac{1}{2} - \frac{1}{p}}
  \Big\| \Big(\sum_{\substack{\gamma \subset \widetilde{\tau} \\ \gamma \in \mathcal{P}(\delta)}} \abs{f_\gamma}^2 \Big)^{\frac{1}{2}} \Big\|_{L^p(w_{T})} \\
&\lesssim  \delta^{-\left(1 - \frac{\beta}{2}\right)\left(\frac{1}{2} - \frac{1}{p}\right)}
  \Big\| \Big(\sum_{\substack{\gamma \subset \widetilde{\tau} \\ \gamma \in \mathcal{P}(\delta)}} \abs{f_\gamma}^2 \Big)^{\frac{1}{2}} \Big\|_{L^p(w_{T})},
\end{align}
where the last inequality holds by virtue of $\Delta \ge \delta^{\frac{\beta}{2}}$.

For $4 \le p \le \infty$, we proceed similarly by applying Lemma \ref{bilnear} and Lemma \ref{ortho}, yielding the following estimate:
\begin{align}\label{bres7}
\nonumber\LpNormOn{\abs{f_{\tau_1}f_{\tau_2}}^{\frac{1}{2}}}{p}{{T}}
&\lesssim  \Delta^{3\left(1 - \frac{1}{p}\right)} \left(\Delta^2\delta^{-\beta}\right)^{-\frac{1}{2}} \Delta^{-\frac{3}{2}}
  \Big\| \Big(\sum_{\substack{\gamma \subset \widetilde{\tau}\\ \gamma \in \mathcal{P}(\delta)}} \abs{f_\gamma}^2 \Big)^{\frac{1}{2}} \Big\|_{L^2(w_{{T}})} \\
\nonumber&\lesssim \Delta^{\left(\frac{1}{2} - \frac{3}{p}\right)} \delta^{\frac{\beta}{2}} \delta^{-(1+\beta)\left(\frac{1}{2} - \frac{1}{p}\right)}
  \Big\| \Big(\sum_{\substack{\gamma \subset \widetilde{\tau} \\ \gamma \in \mathcal{P}(\delta)}} \abs{f_\gamma}^2 \Big)^{\frac{1}{2}} \Big\|_{L^p(w_{{T}})} \\
\nonumber&\lesssim \Delta^{\left(\frac{1}{2} - \frac{3}{p}\right)} \delta^{\frac{1+\beta}{p}  - \frac{1}{2}}
  \Big\| \Big(\sum_{\substack{\gamma \subset \widetilde{\tau} \\ \gamma \in \mathcal{P}(\delta)}} \abs{f_\gamma}^2 \Big)^{\frac{1}{2}} \Big\|_{L^p(w_{{T}})} \\
&\lesssim \left(\MainThmConsFir + \MainThmConsSec\right)
  \Big\| \Big(\sum_{\substack{\gamma \subset \widetilde{\tau} \\ \gamma \in \mathcal{P}(\delta)}} \abs{f_\gamma}^2 \Big)^{\frac{1}{2}} \Big\|_{L^p(w_{{T}})},
\end{align}
where the last inequality follows from the condition $\delta^{\frac{\beta}{2}} \le \Delta \le 1$.
Combining estimates \eqref{bres6} and \eqref{bres7}, we obtain the desired conclusion.
\end{proof}

\subsection{Proof of Theorem \ref{sq func esti thm}}
We now complete the proof of Theorem \ref{sq func esti thm}.

Let $T$ be an axis-parallel rectangle of dimensions $\delta^{-1} \times \delta^{-\beta}$. By virtue of inequalities \eqref{main es2}, \eqref{narrow}, and \eqref{broad}, we have
\begin{align*}
\|f\|_{L^p(T)}
&\leq C_p|\log\delta| \bigg( \sum_{\theta \in \mathcal{P}\bigl(\delta^{\frac{\beta}{2}}\bigr)} \|f_\theta\|_{L^p(T)}^p + \sum_{i=0}^{m-1} \sum_{\tau \in \mathcal{P}(K^{-i})} \sum_{\substack{\tau_1,\tau_2 \in \mathcal{P}(K^{-i-1}) \\ \tau_1,\tau_2 \subset \widetilde{\tau} \\ \text{dist}(\tau_1,\tau_2) \geq  K^{-i-1}}} \LpNormOn{\abs{f_{\tau_1}f_{\tau_2}}^{\frac{1}{2}}}{p}{T}^p \bigg)^{\frac{1}{p}}.
\end{align*}
First, applying Lemma \ref{Narrow esti}, we obtain
\begin{align*}
\sum_{\theta \in \mathcal{P}\bigl(\delta^{\frac{\beta}{2}}\bigr)} \|f_\theta\|_{L^p(T)}^p
&\lesssim \delta^{-\left(1 - \frac{\beta}{2}\right)\left(\frac{p}{2} - 1\right)} \sum_{\theta \in \mathcal{P}\bigl(\delta^{\frac{\beta}{2}}\bigr)} \bigg\| \bigg( \sum_{\substack{\gamma \in \mathcal{P}(\delta) \\ \gamma \subset \theta}} |f_\gamma|^2 \bigg)^{\frac{1}{2}} \bigg\|_{L^p(w_T)}^p \\
&\leq \delta^{-\left(1 - \frac{\beta}{2}\right)\left(\frac{p}{2} - 1\right)} \bigg\| \bigg( \sum_{\gamma \in \mathcal{P}(\delta)} |f_\gamma|^2 \bigg)^{\frac{1}{2}} \bigg\|_{L^p(w_T)}^p,
\end{align*}
since $\ell^1 \subset \ell^{\frac{p}{2}}$ when $p\geq 2$. Next, by Lemma \ref{Broad esti}, we derive
\begin{align*}
& \sum_{\tau \in \mathcal{P}(K^{-i})} \sum_{\substack{\tau_1,\tau_2 \in \mathcal{P}(K^{-i-1}) \\ \tau_1,\tau_2 \subset \widetilde{\tau} \\ \text{dist}(\tau_1,\tau_2) \geq  K^{-i-1}}} \LpNormOn{\abs{f_{\tau_1}f_{\tau_2}}^{\frac{1}{2}}}{p}{T}^p \\
&\lesssim \bigg( \delta^{-\left(1 - \frac{\beta}{2}\right)\left(\frac{p}{2} - 1\right)} + \delta^{-\left(\frac{p}{2} - 1 - \beta\right)} \bigg) \sum_{\tau \in \mathcal{P}(K^{-i})} \sum_{\substack{\tau_1,\tau_2 \in \mathcal{P}(K^{-i-1}) \\ \tau_1,\tau_2 \subset \widetilde{\tau} \\ \text{dist}(\tau_1,\tau_2) \geq  K^{-i-1}}}\bigg\| \bigg( \sum_{\substack{\gamma \subset \widetilde{\tau} \\ \gamma \in \mathcal{P}(\delta)}} \abs{f_\gamma}^2 \bigg)^{\frac{1}{2}} \bigg\|_{L^p(w_T)}^p \\
&\lesssim \bigg( \delta^{-\left(1 - \frac{\beta}{2}\right)\left(\frac{p}{2} - 1\right)} + \delta^{-\left(\frac{p}{2} - 1 - \beta\right)} \bigg) \sum_{\tau \in \mathcal{P}(K^{-i})} \bigg\| \bigg( \sum_{\substack{\gamma \subset \widetilde{\tau} \\ \gamma \in \mathcal{P}(\delta)}} \abs{f_\gamma}^2 \bigg)^{\frac{1}{2}} \bigg\|_{L^p(w_T)}^p.
\end{align*}
Here, the final inequality uses the fact that there are at most $\sim 1$ pairs $\tau_1,\tau_2 \subset \widetilde{\tau}$. By using $\ell^1 \subset \ell^{\frac{p}{2}}$ when $p\geq 2$, we have
\begin{align*}
\sum_{\tau \in \mathcal{P}(K^{-i})} \bigg\| \bigg( \sum_{\substack{\gamma \subset \widetilde{\tau} \\ \gamma \in \mathcal{P}(\delta)}} \abs{f_\gamma}^2 \bigg)^{\frac{1}{2}} \bigg\|_{L^p(w_T)}^p \lesssim \bigg\| \bigg( \sum_{\gamma \in \mathcal{P}(\delta)} |f_\gamma|^2 \bigg)^{\frac{1}{2}} \bigg\|_{L^p(w_T)}^p.
\end{align*}
Combining the above estimates and the fact that $m < |\log\delta|$, we conclude that
\begin{equation*}
\|f\|_{L^p(T)}^p
\leq C_p |\log\delta|^{p+1} \bigg( \delta^{-\left(1 - \frac{\beta}{2}\right)\left(\frac{p}{2} - 1\right)} + \delta^{-\left(\frac{p}{2} - 1 - \beta\right)} \bigg) \bigg\| \bigg( \sum_{\gamma \in \mathcal{P}(\delta)} |f_\gamma|^2 \bigg)^{\frac{1}{2}} \bigg\|_{L^p(w_T)}^p,
\end{equation*}
which verifies the local estimate \eqref{main es1} with $c=1+\frac{1}{p}$.

\section{The proof of Theorem \ref{s cap} and Corollary \ref{cor}}\label{S3}
\subsection{The proof of Theorem \ref{s cap}}
Recall that aim is to show that for $\beta\in[0,1]$ and $p\geq2$, the following inequality holds:
\begin{equation}
    \|f\|_{L^p(\mathbb{R}^2)}\leq C_p|\log\delta|^{c}\bigg(\delta^{-(2-\beta)\bigl(\frac{1}{2}-\frac{1}{p}\bigr)}+\delta^{-(1-\frac{2+\beta}{p})}\bigg)\bigg(\sum_{\gamma\in\mathcal{P}(\delta)}\|f_\gamma\|_{L^p(\mathbb{R}^2)}^p\bigg)^{\frac{1}{p}}.
\end{equation}
We shall verify this by interpolating $L^2,\, L^4$ and $L^\infty$ estimates (see, e.g., \cite[Remark 3.4]{Lee20}). More precisely, we use the interpolation argument for equivalent small cap decoupling estimates, where $f_\gamma$ is defined using a smooth frequency cutoff.

By Plancherel's theorem and the triangle inequality, we first have
\begin{equation*}\label{dec-es1}
  \|f\|_{L^2(\mathbb{R}^2)} \lesssim  \bigg(\sum_{\gamma\in\mathcal{P}(\delta)}\|f_\gamma\|_{L^2(\mathbb{R}^2)}^2\bigg)^{\frac{1}{2}}
\end{equation*}
and
\begin{equation*}\label{dec-es2}
  \|f\|_{L^\infty(\mathbb{R}^2)}\leq \delta^{-1}\sup_{\gamma\in\mathcal{P}(\delta)}\|f_\gamma\|_{L^{\infty}(\mathbb{R}^2)}.
\end{equation*}
Hence, by interpolation, it suffices to prove that
\begin{equation*}\label{dec-es3}
    \|f\|_{L^4(\mathbb{R}^2)}\lesssim|\log\delta|^{c}\delta^{-\frac{2-\beta}{4}}\bigg(\sum_{\gamma\in \mathcal{P}(\delta)}\|f_\gamma\|_{L^4(\mathbb{R}^2)}^4\bigg)^{\frac{1}{4}}.
\end{equation*}

To this end, we apply the small cap decoupling inequality in the case $\beta = 1$ \cite[Theorem 1.1]{Johnsrude23}, together with the flat decoupling inequality (see, e.g., \cite[Proposition 2.4]{DGW20}). This leads to the following deduction.
\begin{align*}
\|f\|_{L^4(\mathbb{R}^2)}&\lesssim|\log\delta|^{c}\delta^{-\frac{\beta}{4}} \bigg(\sum_{\theta\in\mathcal{P}(\delta^{\beta})}\|f_\theta\|_{L^4(\mathbb{R}^2)}^4\bigg)^{\frac{1}{4}}  \\
&\lesssim|\log\delta|^{c} \delta^{-\frac{\beta}{4}} \delta^{\frac{\beta-1}{2}} \bigg(\sum_{\theta\in\mathcal{P}(\delta^{\beta})}\sum_{\substack{\gamma\subset \theta\\\gamma\in\mathcal{P}(\delta)}}\|f_\gamma\|_{L^4(\mathbb{R}^2)}^4\bigg)^{\frac{1}{4}}\\
&\lesssim|\log\delta|^{c}\delta^{-\frac{2-\beta}{4}}\bigg(\sum_{\gamma\in \mathcal{P}(\delta)}\|f_\gamma\|_{L^4(\mathbb{R}^2)}^4\bigg)^{\frac{1}{4}}.
\end{align*}

\subsection{The proof of Corollary \ref{cor}}
    A change of variables yields
    \begin{equation}\label{co es1}
     N^3\int_{[0,1]\times[0,\frac{1}{N^\sigma}]}\bigg|\sum_{k=1}^Na_ke\bigl(x_1k+x_2k^2\bigr)\bigg|^pdx=\int_{[0,N]\times[0,N^{2-\sigma}]}\bigg|\sum_{k=1}^Na_ke\bigl(x_1\cdot\frac{k}{N}+x_2\cdot\frac{k^2}{N^2}\bigr)\bigg|^pdx.
    \end{equation}
    Let $\phi$ be a Schwartz function on $\mathbb{R}^2$ satisfying $|\phi|\sim1$ on $[0,N]\times[0,N^{2-\sigma}]$ and $\text{supp} \widehat{\phi}\subset [0,N^{-1}]\times[0,N^{\sigma-2}]$.
    Applying Theorem \ref{s cap} with $\delta=N^{-1}$ and $\beta=2-\sigma$, we obtain
    \begin{align}\label{co es2}
    \nonumber&\int_{[0,N]\times[0,N^{2-\sigma}]}\bigg|\sum_{k=1}^Na_ke\bigl(x_1\cdot\frac{k}{N}+x_2\cdot\frac{k^2}{N^2}\bigr)\bigg|^pdx\\
\nonumber&\lesssim\int_{\mathbb{R}^2}\bigg|\sum_{k=1}^N\phi(x)a_ke\bigl(x_1\cdot\frac{k}{N}+x_2\cdot\frac{k^2}{N^2}\bigr)\bigg|^pdx\\
\nonumber&\leq C_p (\log N)^{pc}\bigg(N^{\frac{\sigma}{2}(p-2)}+N^{p-4+\sigma}\bigg)\bigg(\sum_{k=1}^N\int_{\mathbb{R}^2}\bigg|\phi(x)a_ke\bigl(x_1\cdot\frac{k}{N}+x_2\cdot\frac{k^2}{N^2}\bigr)\bigg|^pdx\bigg)\\
\nonumber&\leq C_p (\log N)^{pc}\bigg(N^{\frac{\sigma}{2}(p-2)}+N^{p-4+\sigma}\bigg)\bigg(\sum_{k=1}^N|a_k|^p\int_{\mathbb{R}^2}|\phi(x)|^pdx\bigg)\\
&\leq C_p (\log N)^{pc} N^{3-\sigma}\bigg(N^{\frac{\sigma}{2}(p-2)}+N^{p-4+\sigma}\bigg)\bigg(\sum_{k=1}^N|a_k|^p\bigg).
    \end{align}
Combining inequalities \eqref{co es1} and \eqref{co es2}, we conclude that
\begin{equation}
  \int_{[0,1]\times[0,\frac{1}{N^\sigma}]}\bigg|\sum_{k=1}^Na_ke\bigl(x_1k+x_2k^2\bigr)\bigg|^pdx\leq C_p (\log N)^{pc}\bigg(N^{\frac{\sigma}{2}(p-4)}+N^{p-4}\bigg)\bigg(\sum_{k=1}^N|a_k|^p\bigg).
\end{equation}

\section{ Sharpness }\label{S4}
We now demonstrate the sharpness of Theorem \ref{sq func esti thm} and Theorem \ref{s cap}. We consider two standard examples: the constructive interference and block examples (see e.g. \cite{FGM23}).

\subsection*{Example 1}
For each $\gamma \in \mathcal{P}(\delta)$, let $\widehat{f_\gamma}$ be a bump function supported on $\gamma$, and define
\[ f = \sum_{\gamma \in \mathcal{P}(\delta)} f_\gamma. \]
On the one hand, observe that $|f(x)| \gtrsim \delta^\beta$ when $x$ is near the origin, which implies that
\begin{equation}
    \|f\|_{L^p(\mathbb{R}^2)} \gtrsim \delta^\beta \cdot |B(0,1)| = \delta^\beta.
\end{equation}
On the other hand, we can find, via direct computation, that $|f_\gamma| \sim |\gamma| = \delta^{1+\beta}$ on the dual rectangle $\gamma^*$, which can be identified as $\gamma^*= [-\delta^{-1},\delta^{-1}] \times[-\delta^{-\beta},\delta^{-\beta}]$ for all $\gamma$, and $|f_\gamma|$ decays rapidly outside of $\gamma^*$.
Thus, we have
\begin{align*}
    \bigg(\sum_{\gamma \in \mathcal{P}(\delta)} \|f_\gamma\|_{L^p(\mathbb{R}^2)}^p\bigg)^{\frac{1}{p}} &\lesssim \left(\delta^{-1} \cdot \delta^{(1+\beta)(p-1)}\right)^{\frac{1}{p}} = \delta^{-\frac{1}{p} + (1+\beta)\left(1 - \frac{1}{p}\right)}, \\
  \MainThmRHS &\leqC |\gamma| \cdot \LpNormOn{\bigg( \sum_{ \gamma \in \GamAlR } 1\bigg)^{\frac{1}{2}}}{p}{ \gamma^* } \leqC \delta^{1+\beta - \frac{1}{2}} \cdot \delta^{-\frac{1+\beta}{p}} = \delta^{\frac{1}{2} + \beta - \frac{1+\beta}{p}}.
\end{align*}
Therefore, we deduce that
\begin{equation*}
    \frac{\|f\|_{L^p(\mathbb{R}^2)}}{\bigg(\sum_{\gamma \in \mathcal{P}(\delta)} \|f_\gamma\|_{L^p(\mathbb{R}^2)}^p\bigg)^{\frac{1}{p}}} \gtrsim \frac{\delta^\beta}{\delta^{1+\beta - \frac{2+\beta}{p} }} = \delta^{-\left(1 - \frac{2+\beta}{p}\right)},
\end{equation*}
and
\[ \frac{\|f\|_{L^p(\mathbb{R}^2)}}{\MainThmRHS} \geqC \frac{\delta^\beta}{\delta^{\frac{1}{2} + \beta - \frac{1+\beta}{p}}} = \MainThmConsSec. \]

\subsection*{Example 2} Fix a canonical cap \( \theta \in \GamHfR \). For each \( \gamma \in \mathcal{P}(\delta) \), let \( \widehat{f_\gamma} \) be a bump function supported on \( \gamma \) if \( \gamma \subset \theta \), and set \( \widehat{f_\gamma} = 0 \) otherwise. We then define
\[ f = \sum_{\gamma\in\mathcal{P}(\delta)}f_\gamma. \]
Then we have \(| f_\gamma| \sim |\gamma| = \delta^{1+\beta} \) on \( \gamma^* \) and decays rapidly outside the dual rectangle \( \gamma^* \), additionally, \( |f| \sim |\theta| = \delta^{\frac{3\beta}{2}} \) on \( \theta^* \) and decays rapidly outside the dual rectangle \( \theta^* \).
We now have
\begin{equation*}
    \|f\|_{L^p(\mathbb{R}^2)} \geqC \ |\theta| \cdot |\theta^*|^{\frac{1}{p}} = \delta^{\frac{3\beta}{2}\left(1-\frac{1}{p}\right)},
\end{equation*}
\begin{align*}
    \bigg(\sum_{\gamma \in \mathcal{P}(\delta)} \|f_\gamma\|_{L^p(\mathbb{R}^2)}^p\bigg)^{\frac{1}{p}} \lesssim \left(\delta^{\frac{\beta}{2}-1} \cdot \delta^{(1+\beta)(p-1)}\right)^{\frac{1}{p}} = \delta^{\beta\left(1-\frac{1}{2p}\right)+1-\frac{2}{p}},
\end{align*}
and
\[ \MainThmRHS \leqC |\gamma| \cdot \LpNormOn{\bigg(\sum_{ \gamma \subset \theta } 1\bigg)^{\frac{1}{2}}}{p}{ \gamma^* } = \delta^{\frac{1}{2}+\frac{5\beta}{4}-\frac{1+\beta}{p}}. \]
Therefore, we deduce that
\begin{equation*}
    \frac{\|f\|_{L^p(\mathbb{R}^2)}}{\bigg(\sum_{\gamma \in \mathcal{P}(\delta)} \|f_\gamma\|_{L^p(\mathbb{R}^2)}^p\bigg)^{\frac{1}{p}}} \gtrsim \frac{\delta^{\frac{3\beta}{2}\left(1-\frac{1}{p}\right)}}{\delta^{\beta\left(1-\frac{1}{2p}\right)+1-\frac{2}{p}}} = \delta^{-(2-\beta)\left(\frac{1}{2} - \frac{1}{p}\right)},
\end{equation*}
and
\[ \frac{\|f\|_{L^p(\mathbb{R}^2)}}{\MainThmRHS} \gtrsim \frac{\delta^{\frac{3\beta}{2}\left(1-\frac{1}{p}\right)}}{\delta^{\frac{1}{2}+\frac{5\beta}{4}-\frac{1+\beta}{p}}} = \delta^{-(1-\frac{\beta}{2})\left(\frac{1}{2} - \frac{1}{p}\right)}. \]

\subsection*{Acknowledgements}
The authors were supported in part by grants from the Research Grants Council of the Hong Kong Administrative Region, China (Project No. CityU 21309222 and CityU 11308924).
\bibliographystyle{amsplain}

\end{document}